\begin{document}

\title{Mean-square Convergence of a Symplectic Local Discontinuous Galerkin Method Applied to Stochastic Linear Schr\"{o}dinger Equation\thanks{This work was
supported by NNSFC (NO. 91130003,
NO. 11021101, NO. 11290142 and NO. 11471310).}}
% Short title for running heads:
\shorttitle{SYMPLECTIC LDG METHOD FOR SSE}

\author{%
{\sc
Chuchu Chen\thanks{Email: chenchuchu@lsec.cc.ac.cn},
Jialin Hong\thanks{Email: hjl@lsec.cc.ac.cn}
} \\[2pt]
State Key Laboratory of Scientific and Engineering Computing,
Institute of Computational Mathematics
and Scientific/Engineering Computing,
Academy of Mathematics and Systems Science,
Chinese Academy of Sciences,
Beijing, 100190, China\\[6pt]
{\sc and}\\[6pt]
{\sc Lihai Ji}\thanks{Corresponding author. Email: jilihai@lsec.cc.ac.cn}\\[2pt]
Institute of Applied Physics and Computational Mathematics, Beijing, 100094, China.
}
% Short list of authors for running heads:
\shortauthorlist{CHUCHU CHEN, JIALIN HONG AND LIHAI JI}

\maketitle

\begin{abstract}
% Body of abstract:
{In this paper, we investigate the mean-square convergence of a novel symplectic local discontinuous Galerkin method in ${\mathbb L}^2$-norm for stochastic linear Schr\"{o}dinger equation with multiplicative noise. It is shown that the mean-square error is bounded not only by the temporal and spatial step-sizes, but also by their ratio. The mean-square convergence rate with respect to the computational cost is derived under appropriate assumptions for initial data and noise.  Meanwhile, we show that the method preserves the discrete
charge conservation law which implies an ${\mathbb L}^{2}$-stability.}
% Keywords:
{symplectic method; local discontinuous Galerkin method; stochastic linear Schr\"{o}dinger equation; ${\mathbb L}^{2}$-stability; charge conservation law; mean-square convergence.}
\end{abstract}

\section{Introduction}
\label{sec;introduction}
In this paper we consider the stochastic linear Schr\"odinger equation with multiplicative noise
\begin{equation}\label{NLS}
idu-(\Delta u+Q(x)u)dt=u\circ dW,\qquad u(x,0)=u_{0}(x),
\end{equation}
where $t\in[0,T]$, $x\in {\mathcal O}\subset {\mathbb R}^{d}$, $Q\in{\mathbb H}^{3}({\mathcal O})$ and periodic boundary condition holds.
Here, the $\circ$ in the last term in (\ref{NLS}) means that the product is of
Stratonovich type and $W$ on ${\mathbb L}^2({\mathcal O}^{d})$ is a
real-valued Wiener process with a filtered probability space $(\Omega,\mathcal{F},P,\{{\mathcal F}_{t}\}_{t\in[0,T]})$.
  It has the expansion form  $W(t,x,\omega)=\sum\limits_{k=0}^{\infty}\beta_{k}(t,\omega)\phi e_{k}(x),$ with $(e_{k})_{k\in \mathbb{N}^{d}}$ being an orthonormal basis of ${\mathbb L}^{2}({\mathcal O}^{d})$,
 $\{\beta_{k}\}_{k\in \mathbb{N}^d}$ being a sequence of independent Brownian motions and $\phi\in{\mathcal L}_{2}({\mathbb L}^2({\mathcal O}^d);{\mathbb H}^{\gamma}({\mathcal O}^d))$ being a Hilbert-Schmidt operator.
The phase flow of equation \eqref{NLS} is stochastic symplectic \citep[see][]{RefChu}, i.e., $$\bar{\omega}(t)=\int_{\mathcal O}\textrm{d}(r(t))\wedge \textrm{d}(s(t))dx=\bar{\omega}(0),$$
 with $r$ and $s$ being the real and imaginary parts of $u$, respectively, and its solution preserves the charge conservation law almost surely \citep[see][]{RefBouard3}, i.e.,
 $$\int_{\mathcal O}|u(x,t)|^{2}dx=\int_{\mathcal O}|u_{0}(x)|^{2}dx.$$

We propose a symplectic local discontinuous Galerkin method to discrete equation \eqref{NLS} in order to on one hand preserve the properties of the original problems as much as possible and on another hand combine the attractive properties of local discontinuous Galerkin method \citep[see, e.g.,][]{RefCockburn4,RefCockburn2,RefCockburn3}.
We refer interested readers to  \citep{RefOhannes} and references therein for the numerical simulation of the deterministic Schr\"{o}dinger equation based on local discontinuous Galerkin method.
Due to the reason that equation \eqref{NLS} is meaningful in the sense of integral,
 we apply the midpoint scheme to discretize the temporal direction at first avoiding dealing with double temporal-spatial integrals which is introduced by stochastic integral and local discontinuous Galerkin discretization. It is shown that the midpoint semi-discretization not only is a symplectic method,
 but also possesses the discrete charge conservation law.
 %Another important contribution of this work is to give the first order mean-square convergence of the full-discrete method.
 Furthermore, we show that the semi-discretization is of order $1$ in mean-square convergence sense via a direct approach while authors in \citep{RefChu} proved the same result via a fundamental convergence theorem on the mean-square convergence for the temporal semi-discretizations.
  The main difficulty lies in the analysis of the mean-square convergence order for the spatial direction, where
 we use local discontinuous Galerkin method to discrete the semi-discretized equation and obtain the full-discrete method which is called symplectic local discontinuous Galerkin method in this paper. We solve it by means of the standard approximation theory of projection operator, It\^{o} isometry and the adapted properties of processes $u$ and $W$.
 As a result we analysis the mean-square convergence error for the symplectic local discontinuous Galerkin method, and derive the mean-square convergence rate with respect to the computational cost under appropriate hypothesis on initial data and noise. Moreover theoretical analysis shows that the obtained full-discrete method is ${\mathbb L}^{2}$-stable and preserves the discrete charge conservation law.

The rest of this paper is organized as follows. In section 2, we propose the symplectic local
discontinuous Galerkin method for stochastic Schr\"odinger equation and derive the discrete charge conservation law. In section 3, we study the mean-square convergence of the obtained method and present the mean-square error estimation. Some proofs and calculations are postponed to the final appendices.
\section{The symplectic local discontinuous Galerkin method}
In this section, we will apply implicit midpoint scheme to \eqref{NLS} in the temporal direction, then we discretize the spatial direction by local discontinuous Galerkin method and obtain the full-discrete method.
\subsection{Temporal semi-discrete scheme}\label{3.1}
The midpoint scheme for \eqref{NLS} reads
\begin{equation}\label{Com}
iu^{n+1}=iu^{n}-\Delta t\Big(\Delta u^{n+\frac12}+Q(x)u^{n+\frac12}\Big)+u^{n+\frac12}\Delta \tilde{W}_{n},\; n=0,1,\cdots,N
\end{equation}
where $\Delta t$ is the time step size, $N=\frac{T}{\Delta t}$, $u^{n+\frac{1}{2}}=\frac{1}{2}(u^{n+1}+u^{n})$, and $\Delta \tilde{W}_{n}=\sum\limits_{k=0}^{\infty}\sqrt{\Delta t}\zeta_{k,n}^{\kappa}\phi e_{k}(x) $ with $\zeta_{k,n}^{\kappa}$ being the truncation of a $\mathcal{N}(0,1)$-distribution random variable
$\xi_{k,n}$:
\[
  \zeta_{k,n}^{\kappa}=\begin{cases} \kappa &\text{ if }\xi_{k,n}>\kappa;\\
                                             \xi_{n} &\text{ if }|\xi_{k,n}|\leq \kappa;\\
                                             -\kappa &\text{ if }\xi_{k,n}<-\kappa  \end{cases}
\]
with $\kappa:=\sqrt{4|\ln(\Delta t)|}$.
This choice is motivated by
the fact that standard Gaussian random variables  are unbounded for arbitrary
values of $\Delta t$,  \citep[see][]{RefMils} for more details. For the truncated Wiener process, we have the following properties:
\begin{equation}\label{trunW}
 \begin{split}
  &{\rm (i)}\qquad E\|\Delta \tilde{W}_{n}-\Delta W_{n}\|_{{\mathbb H}^1}^2\leq K\Delta t^3,\\
  &{\rm (ii)}\qquad E\|(\Delta \tilde{W}_{n})^2-(\Delta W_{n})^2\|_{{\mathbb H}^1}^2\leq K\Delta t^4,
  \end{split}
\end{equation}
where the constant $K$ depends on $\|\phi\|_{\mathcal{L}_{2}({\mathbb L}^2,{\mathbb H}^1)}$.
Based on the fact that $\tilde{W}$ is real-valued, by multiplying both sides of equation (\ref{Com}) by $u^{\star n+\frac{1}{2}}$, which is the conjugate of $u^{n+\frac{1}{2}}$, and then taking the imaginary part and integrating it over the whole space domain. We can get the discrete charge conservation law as follows.
\begin{proposition}
Under the periodic boundary conditions, the semi-discrete scheme \eqref{time} of the system \eqref{NLS} has the discrete charge conservation law, i.e.,
\begin{equation}
\int_{\mathcal O}|u^{n+1}(x)|^{2}dx=\int_{\mathcal O}|u^{n}(x)|^{2}dx,~~n=0,1,...,N.
\end{equation}
\end{proposition}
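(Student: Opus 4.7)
The plan is to follow the hint given immediately before the statement: multiply \eqref{Com} by the complex conjugate $u^{\star\,n+\frac12}=\overline{u^{n+\frac12}}$, integrate over $\mathcal{O}$, and extract the imaginary part. First I would rewrite \eqref{Com} in the form $i(u^{n+1}-u^n) = -\Delta t\bigl(\Delta u^{n+\frac12}+Q(x)u^{n+\frac12}\bigr) + u^{n+\frac12}\Delta\tilde{W}_n$, multiply by $\overline{u^{n+\frac12}}$, and integrate over $\mathcal{O}$.

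For the left-hand side I would exploit the algebraic identity
\begin{equation*}
(u^{n+1}-u^n)\,\overline{u^{n+\frac12}} \;=\; \tfrac{1}{2}\bigl(|u^{n+1}|^2-|u^n|^2\bigr) + i\,\mathrm{Im}\bigl(u^{n+1}\overline{u^n}\bigr),
\end{equation*}
which follows from expanding $u^{n+\frac12}=\tfrac12(u^{n+1}+u^n)$ and using $z-\bar z=2i\,\mathrm{Im}\,z$. Multiplying by $i$ and taking the imaginary part therefore recovers exactly $\tfrac{1}{2}(|u^{n+1}|^2-|u^n|^2)$, the quantity we wish to integrate to zero.

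For the right-hand side, I would verify that each of the three integrals is real, so that its imaginary part vanishes. The potential term $\int_{\mathcal{O}} Q(x)|u^{n+\frac12}|^2\,dx$ is real because $Q$ is real-valued. The Laplacian term becomes $-\int_{\mathcal{O}}|\nabla u^{n+\frac12}|^2\,dx$ after integration by parts, with the boundary contribution vanishing thanks to periodic boundary conditions. Finally, the noise term $\int_{\mathcal{O}}|u^{n+\frac12}|^2\,\Delta\tilde{W}_n\,dx$ is real because $\Delta\tilde{W}_n$ is a real-valued random field by the assumption that $W$ is real-valued. Taking the imaginary part of the integrated identity therefore collapses to $\int_{\mathcal{O}}|u^{n+1}|^2\,dx=\int_{\mathcal{O}}|u^n|^2\,dx$.

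I do not expect any genuine obstacle: the argument is a short, direct calculation whose success rests on three structural features. The midpoint evaluation makes the left-hand side telescope into $|u^{n+1}|^2-|u^n|^2$; the Stratonovich (rather than It\^o) formulation, combined with the real-valuedness of $W$, ensures that the stochastic contribution is purely real and carries no additional drift (so no quadratic-variation correction has to be canceled); and the self-adjointness of $-\Delta+Q$ under periodic boundary conditions makes the deterministic flux terms real. Any departure from these three ingredients (e.g.\ an It\^o noise, a complex-valued $Q$, or a non-symmetric time discretization) would generically destroy the discrete charge conservation law, which is why each must be checked explicitly rather than taken for granted.
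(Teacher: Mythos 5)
Your proof is correct and follows exactly the route the paper itself indicates (the paper only sketches it in the sentence preceding the proposition: multiply \eqref{Com} by $u^{\star\, n+\frac12}$, take the imaginary part, and integrate). The algebraic identity for $(u^{n+1}-u^n)\overline{u^{n+\frac12}}$ and the verification that the Laplacian, potential, and real-noise terms all contribute real integrals are precisely the computations the paper leaves implicit.
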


Furthermore, the semi-discrete scheme (\ref{time}) preserves the stochastic symplectic structure; \citep[see][]{RefChu}.

\begin{proposition}
The implicit midpoint scheme \eqref{time} for the system \eqref{NLS} is stochastic symplectic.
\end{proposition}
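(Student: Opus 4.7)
The plan is to reduce the symplecticity check to the well-known fact that the implicit midpoint rule preserves the symplectic two-form on any (stochastic) Hamiltonian system, and then verify that the decomposed scheme is indeed of this form. To set this up, I would first split $u^n = r^n + i s^n$ into real and imaginary parts. Since $\Delta\tilde W_n$ is real-valued and $Q$ is real, separating real and imaginary parts of \eqref{Com} gives the coupled real system
\begin{equation*}
r^{n+1}-r^{n}=-\Delta t\bigl(\Delta s^{n+\frac12}+Q s^{n+\frac12}\bigr)+s^{n+\frac12}\Delta\tilde W_{n},\qquad
s^{n+1}-s^{n}=\Delta t\bigl(\Delta r^{n+\frac12}+Q r^{n+\frac12}\bigr)-r^{n+\frac12}\Delta\tilde W_{n}.
\end{equation*}
Next I would identify this as the midpoint discretization of a stochastic Hamiltonian system with deterministic Hamiltonian $H_{0}(r,s)=\tfrac12\int_{\mathcal O}(|\nabla r|^{2}+|\nabla s|^{2}-Q(r^{2}+s^{2}))\,dx$ and stochastic Hamiltonians $H_{k}(r,s)=\tfrac12\int_{\mathcal O}(r^{2}+s^{2})\phi e_{k}(x)\,dx$, using the canonical structure matrix $J=\begin{pmatrix}0&I\\-I&0\end{pmatrix}$ on the pair $(r,s)$.

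The heart of the proof would be to take two independent variations $dr^{n},ds^{n}$ of the initial data, propagate them through the scheme (which is linear in the variations), and compute the pullback of the symplectic form $\bar\omega^{n+1}=\int_{\mathcal O}dr^{n+1}\wedge ds^{n+1}\,dx$. Writing $dr^{n+1}=2\,dr^{n+\frac12}-dr^{n}$ and $ds^{n+1}=2\,ds^{n+\frac12}-ds^{n}$ and substituting the variational equations into the wedge product, one expands
\begin{equation*}
dr^{n+1}\wedge ds^{n+1}-dr^{n}\wedge ds^{n}=2\,dr^{n+\frac12}\wedge(ds^{n+1}-ds^{n})-2\,(dr^{n+1}-dr^{n})\wedge ds^{n+\frac12}+\text{\emph{lower-order cancellations}}.
\end{equation*}
Substituting the scheme for the increments $dr^{n+1}-dr^{n}$ and $ds^{n+1}-ds^{n}$ and integrating over $\mathcal O$, the key step is that the operator $\Delta+Q$ is self-adjoint under periodic boundary conditions, so $\int_{\mathcal O}dr^{n+\frac12}\cdot(\Delta+Q)dr^{n+\frac12}\,dx$ and its $s$-analogue are symmetric bilinear in the two variations and therefore antisymmetric in the wedge, cancelling out. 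The multiplicative noise term is handled identically because $\Delta\tilde W_{n}$ is a real scalar field acting by pointwise multiplication, which is also self-adjoint. All mixed terms cancel and one concludes $\bar\omega^{n+1}=\bar\omega^{n}$ almost surely.

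The main obstacle is interpreting the wedge product $dr\wedge ds$ rigorously in the infinite-dimensional function space setting. I would handle this by viewing $dr^{n},ds^{n}$ as a pair of tangent vectors (variations) along the flow, so that $\int_{\mathcal O}dr^{n}\wedge ds^{n}\,dx$ is understood as the antisymmetrized bilinear form on pairs of such variations, exactly as in the continuous setting described in the introduction. A safer alternative, if the formal manipulation above is unsatisfactory, is to project the scheme onto a finite Galerkin truncation spanned by, say, the first $M$ Fourier modes, in which case the reduced system is a genuine finite-dimensional stochastic Hamiltonian ODE and symplecticity of the midpoint rule follows from the standard theorem; then one passes $M\to\infty$. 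Either route yields the stated preservation, and the result can also be obtained by directly appealing to Theorem~$3.1$ of \citep{RefChu}, which establishes symplecticity of midpoint discretizations for this class of stochastic Schrödinger equations.
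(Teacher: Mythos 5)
Your approach is sound, but note first that the paper itself offers no proof of this proposition at all: it simply asserts the statement and cites \citep{RefChu}, so your self-contained verification is genuinely more than what the paper does, and your closing remark (appeal to the cited reference) is in fact the paper's entire argument. Your direct route -- split $u^n=r^n+is^n$, observe that the one-step map is the midpoint discretization of a linear system whose increments are given by a self-adjoint operator, and kill the wedge terms by symmetry -- is the right one and does close, but the displayed expansion of $dr^{n+1}\wedge ds^{n+1}-dr^{n}\wedge ds^{n}$ is incorrect as written (the factors of $2$ and the sign on the second term are wrong, and the leftover is not merely ``lower-order''). The identity you want is the exact polarization
\begin{equation*}
dr^{n+1}\wedge ds^{n+1}-dr^{n}\wedge ds^{n}=(dr^{n+1}-dr^{n})\wedge ds^{n+\frac12}+dr^{n+\frac12}\wedge(ds^{n+1}-ds^{n}),
\end{equation*}
with no remainder. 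Writing $L:=\Delta t(\Delta+Q)-\Delta\tilde W_{n}$, the scheme (hence its variational equation, since the problem is linear) reads $dr^{n+1}-dr^{n}=-L\,ds^{n+\frac12}$, $ds^{n+1}-ds^{n}=L\,dr^{n+\frac12}$, so the identity gives $\bar\omega^{n+1}-\bar\omega^{n}=\int_{\mathcal O}\bigl(-L\,ds^{n+\frac12}\wedge ds^{n+\frac12}+dr^{n+\frac12}\wedge L\,dr^{n+\frac12}\bigr)dx$, and each term vanishes because $L$ is real and self-adjoint under periodic boundary conditions (multiplication by the real field $\Delta\tilde W_{n}$ included), so the associated bilinear form is symmetric and its antisymmetrization is zero. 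This holds pathwise for every realization of $\Delta\tilde W_{n}$, so no stochastic calculus is needed at the discrete level; your Galerkin-truncation fallback is a reasonable way to make the infinite-dimensional wedge rigorous. One cosmetic caveat: your real--imaginary decomposition carries the signs that follow from \eqref{Com}, which are opposite to those printed in the paper's \eqref{time}; this discrepancy is internal to the paper and does not affect symplecticity either way.
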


\subsection{Temporal-spatial full-discrete method}
In this subsection, we consider the local discontinuous Galerkin method for the system (\ref{time}) in the spatial direction and obtain the full-discrete method. To this end, we introduce some spatial-gird notations for the case $d=1$, ${\mathcal O}=[L_f,\,L_{r}]$ for simplicity, and the results hold for the general dimensional problems.
We denote the mesh by $I_{j}=[x_{j-\frac{1}{2}},x_{j+\frac{1}{2}}]$, for $1\leq j\leq J$, where
$
L_{f}=x_{\frac{1}{2}}<x_{\frac{3}{2}}<\cdot\cdot\cdot<x_{N+\frac{1}{2}}=L_{r}.
$
Let
$
\Delta x_{j}=x_{j+\frac{1}{2}}-x_{j-\frac{1}{2}},~1\leq j\leq J
$
with $h=\underset{1\leq j\leq J}{\max}\Delta x_{j}$ being the maximum mesh size.
Assume the mesh is regular, namely there is a constant $c>0$ independent of $h$ such that
$
\Delta x_{j}\geq ch,~~~1\leq j\leq J.
$

By decomposing the complex function $u^{n}$ into its real and imaginary parts
$u^{n}=r^{n}+is^{n}$
with $r^n$ and $s^n$ being real functions, we obtain the following first order semi-discrete system
\begin{equation}\label{time}
\begin{split}
r^{n+1}&=r^{n}+\Big((p_{x})^{n+\frac{1}{2}}+Q(x)s^{n+\frac{1}{2}} \Big)\Delta t+s^{n+\frac{1}{2}}\Delta \tilde{W}_{n},\\
p^{n+\frac{1}{2}}&=(s_{x})^{n+\frac{1}{2}},\\
s^{n+1}&=s^{n}-\Big((q_{x})^{n+\frac{1}{2}}+Q(x) r^{n+\frac{1}{2}}\Big)\Delta t-r^{n+\frac{1}{2}}\Delta \tilde{W}_{n},\\
q^{n+\frac{1}{2}}&=(r_{x})^{n+\frac{1}{2}}.
\end{split}
\end{equation}

We consider the local discontinuous Galerkin method for the system (\ref{time}) in the spatial direction and obtain the full-discrete method: find $r_{h},p_{h},s_{h},q_{h}\in V_{h}^{k}$, which now denotes real piecewise polynomial of degree at most $k$, such that, for all test functions $\nu_{h},\omega_{h},\alpha_{h},\beta_{h}\in V_{h}^{k}=\{\nu:\nu\in P^{k}(I_{j});~1\leq j\leq J\}$ with
$P^{k}(I_{j})$ being the set of polynomials of degree up to $k$ defined on the cell $I_{j}$.
\begin{equation}\label{LDG22}
\begin{split}
&\int_{I_{j}}r_{h}^{n+1}\nu_{h}dx-\int_{I_{j}}r_{h}^{n}\nu_{h}dx-\Delta t\Big[(\hat{p}^{n+\frac{1}{2}}\nu_{h}^{-})_{j+\frac{1}{2}}-(\hat{p}^{n+\frac{1}{2}}\nu_{h}^{+})_{j-\frac{1}{2}}\Big]\\
&~~~~~~+\Delta t\int_{I_{j}}\Big(p_{h}^{n+\frac{1}{2}}(\nu_{h})_{x}-s_{h}^{n+\frac{1}{2}}Q_{h}\nu_{h}\Big)dx-\int_{I_{j}}s_{h}^{n+\frac{1}{2}}\nu_{h}\Delta \tilde{W}_{n}dx=0,\\
&\int_{I_{j}}p_{h}^{n+\frac{1}{2}}\omega_{h}dx+\int_{I_{j}}s_{h}^{n+\frac{1}{2}}(\omega_{h})_{x}dx-\Big[(\hat{s}^{n+\frac{1}{2}}\omega_{h}^{-})_{j+\frac{1}{2}}-(\hat{s}^{n+\frac{1}{2}}\omega_{h}^{+})_{j-\frac{1}{2}}\Big]=0,\\
&\int_{I_{j}}s_{h}^{n+1}\alpha_{h}dx-\int_{I_{j}}s_{h}^{n}\alpha_{h}dx+\Delta t\Big[(\hat{q}^{n+\frac{1}{2}}\alpha_{h}^{-})_{j+\frac{1}{2}}-(\hat{q}^{n+\frac{1}{2}}\alpha_{h}^{+})_{j-\frac{1}{2}}\Big]\\
&~~~~~~-\Delta t\int_{I_{j}}\Big(q_{h}^{n+\frac{1}{2}}(\alpha_{h})_{x}-r_{h}^{n+\frac{1}{2}}Q_{h}\alpha_{h}\Big)dx+\int_{I_{j}}r_{h}^{n+\frac{1}{2}}\alpha_{h}\Delta \tilde{W}_{n}dx=0,\\
&\int_{I_{j}}q_{h}^{n+\frac{1}{2}}\beta_{h}dx+\int_{I_{j}}r_{h}^{n+\frac{1}{2}}(\beta_{h})_{x}dx-\Big[(\hat{r}^{n+\frac{1}{2}}\beta_{h}^{-})_{j+\frac{1}{2}}-(\hat{r}^{n+\frac{1}{2}}\beta_{h}^{+})_{j-\frac{1}{2}}\Big]=0.
\end{split}
\end{equation}
In the sequel, we denote by $(u_{h})^{+}_{j+\frac{1}{2}}$ and $(u_{h})^{-}_{j+\frac{1}{2}}$ the values of $u_{h}$ at $x_{j+\frac{1}{2}}$, from the right cell $I_{j+1}$, and from the left cell $I_{j}$, respectively.
And the numerical fluxes become
\begin{equation}\label{Fluxes}
\hat{p}=p^{+},~\hat{r}=r^{-},~\hat{q}=q^{+},~\hat{s}=s^{-},
\end{equation}
where we have omitted the half-integer indices $j+\frac{1}{2}$ as all quantities in (\ref{Fluxes}) are computed at the same points.

\begin{remark}
The choice for the fluxes (\ref{Fluxes}) is not unique. The important point is that $\hat{r}$ and $\hat{q}$, $\hat{s}$ and $\hat{p}$ should be chosen from different directions.
\end{remark}

With such a choice of fluxes (\ref{Fluxes}), we can get the first main result about discrete charge conservation law of the symplectic local discontinuous Galerkin method (\ref{LDG22}).
\begin{theorem}\label{Chagge}
Under the periodic boundary conditions, the symplectic local discontinuous Galerkin method \eqref{LDG22} has the discrete charge conservation law, i.e.,
\begin{equation}\label{Discrete-charge}
\int_{L_{f}}^{L_{r}}|u_{h}^{n+1}|^{2}dx=\int_{L_{f}}^{L_{r}}|u_{h}^{n}|^{2}dx, ~~n=0,1,2,...,N.
\end{equation}
\end{theorem}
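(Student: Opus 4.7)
The plan is to test the four equations in (\ref{LDG22}) with carefully chosen piecewise polynomials so that, after summation over cells $j$ and use of periodic boundary conditions, only the temporal differences of $|u_h^n|^2$ survive. Specifically, I would set $\nu_h=r_h^{n+1/2}$ in the first equation, $\alpha_h=s_h^{n+1/2}$ in the third, and $\omega_h=-\Delta t\, q_h^{n+1/2}$, $\beta_h=\Delta t\, p_h^{n+1/2}$ in the second and fourth, and then add all four resulting identities. The temporal terms collapse to $\tfrac12\sum_j\int_{I_j}\bigl[(r_h^{n+1})^2-(r_h^n)^2+(s_h^{n+1})^2-(s_h^n)^2\bigr]\,dx$ via the midpoint identity $2f^{n+1/2}(f^{n+1}-f^n)=(f^{n+1})^2-(f^n)^2$. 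Because $Q$ is real and $\Delta\tilde W_n$ is real-valued, the potential contributions and the noise contributions from equations 1 and 3 appear with opposite signs and cancel exactly; the $\int p_h^{n+1/2}q_h^{n+1/2}\,dx$ terms arising from the auxiliary equations 2 and 4 also cancel by the antisymmetric scaling in $\omega_h$ and $\beta_h$.

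The core step, and the only place where the flux choice enters, is showing that the remaining spatial terms vanish. After a periodic re-indexing of the interface sums, the volume integrals $\int p_h^{n+1/2}(r_h^{n+1/2})_x$ from equation 1 and $\int r_h^{n+1/2}(p_h^{n+1/2})_x$ from equation 4 combine into $\int(p_h^{n+1/2}r_h^{n+1/2})_x$, which in turn telescopes across cells into the pure interface sum $-\sum_j[pr]_{j+1/2}$ with $[v]:=v^+-v^-$; analogously for the $q,s$ pair. The residual spatial contribution therefore organizes, up to the overall factor $\Delta t$, as
\begin{equation*}
\sum_j\bigl(\hat p[r]+\hat r[p]-[pr]\bigr)_{j+1/2}-\sum_j\bigl(\hat q[s]+\hat s[q]-[qs]\bigr)_{j+1/2}.
\end{equation*}
Inserting the flux choice (\ref{Fluxes}), namely $\hat p=p^+,\,\hat r=r^-,\,\hat q=q^+,\,\hat s=s^-$, a direct expansion shows that each summand vanishes pointwise: $p^+(r^+-r^-)+r^-(p^+-p^-)-(p^+r^+-p^-r^-)=0$, and similarly for $q,s$.

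Once this spatial cancellation is in place, what remains is exactly $\tfrac12\sum_j\int_{I_j}\bigl[|u_h^{n+1}|^2-|u_h^n|^2\bigr]\,dx=0$, which is the claim (\ref{Discrete-charge}). I expect the main obstacle to be careful bookkeeping in the periodic shift of the flux sums and verification of the pointwise algebraic identity above; this is precisely the identity that would fail if $\hat r,\hat q$ were taken from the same side as $\hat s,\hat p$, which explains the remark immediately preceding the theorem.
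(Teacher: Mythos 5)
Your proposal is correct and is essentially the paper's argument: both test the scheme against the midpoint values of the numerical solution (the paper packages this as $\nu_h=u_h^{\star\, n+\frac12}$, $\omega_h=\psi_h^{\star\, n+\frac12}$ in complex form and subtracts the conjugated identity, which is exactly your real-variable combination of the four equations), use the midpoint identity for the temporal terms, and rely on the telescoping of the interface sums under periodicity together with the pointwise cancellation $p^+(r^+-r^-)+r^-(p^+-p^-)-(p^+r^+-p^-r^-)=0$ forced by the alternating flux choice \eqref{Fluxes}. The real-variable bookkeeping you propose is sound and all claimed cancellations (potential, noise, $\int p_h q_h$) check out.
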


\begin{proof}
To complete the proof of the discrete charge conservation law. First, we write (\ref{LDG22}) as the complex form. Denote $u_{h}^{n}=r_{h}^{n}+is_{h}^{n}, \psi_{h}^{n}=q_{h}^{n}+ip_{h}^{n}$, and take $\alpha_{h}=\nu_{h},\beta_{h}=\omega_{h}$, then (\ref{LDG22}) become
\begin{equation}\label{LDG1}
\begin{split}
&i\int_{I_{j}}u_{h}^{n+1}\nu_{h}dx-i\int_{I_{j}}u_{h}^{n}\nu_{h}dx-[(\hat{\psi}^{n+\frac{1}{2}}\nu_{h}^{-})_{j+\frac{1}{2}}-(\hat{\psi}^{n+\frac{1}{2}}\nu_{h}^{+})_{j-\frac{1}{2}}]\Delta t\\
&+\Delta t\int_{I_{j}}(\psi_{h}^{n+\frac{1}{2}}(\nu_{h})_{x}-u_{h}^{n+\frac{1}{2}}Q_{h}\nu_{h})dx-\int_{I_{j}}u_{h}^{n+\frac{1}{2}}\nu_{h}\Delta \tilde{W}_{n}dx=0,\\
&\int_{I_{j}}\psi_{h}^{n+\frac{1}{2}}\omega_{h}dx+\int_{I_{j}}u_{h}^{n+\frac{1}{2}}(\omega_{h})_{x}dx-\Big[(\hat{u}^{n+\frac{1}{2}}\omega_{h}^{-})_{j+\frac{1}{2}}-(\hat{u}^{n+\frac{1}{2}}\omega_{h}^{+})_{j-\frac{1}{2}}\Big]=0.
\end{split}
\end{equation}
where
\begin{equation}\label{flux}
\hat{u}=r_{h}^{-}+is_{h}^{-},~~\hat{\psi}=q_{h}^{+}+ip_{h}^{+}.
\end{equation}

Now, we take the complex conjugate for every terms in system (\ref{LDG1})
\begin{equation}\label{2.146}
\begin{split}
&-i\int_{I_{j}}(u_{h}^{\star})^{n+1}\nu_{h}^{\star}dx+i\int_{I_{j}}(u_{h}^{\star})^{n}\nu_{h}^{\star}dx-\Delta t\Big[(\hat{\psi}^{\star n+\frac{1}{2}}\nu_{h}^{\star-})_{j+\frac{1}{2}}-(\hat{\psi}^{\star n+\frac{1}{2}}\nu_{h}^{\star+})_{j-\frac{1}{2}}\Big]\\
&+\Delta t\int_{I_{j}}\Big(\psi_{h}^{\star n+\frac{1}{2}}(\nu_{h}^{\star})_{x}-u_{h}^{\star n+\frac{1}{2}}Q_{h}\nu_{h}^{\star}\Big)dx-\int_{I_{j}}u_{h}^{\star n+\frac{1}{2}}\nu_{h}^{\star}\Delta \tilde{W}_{n}dx=0,\\
&\int_{I_{j}}\psi_{h}^{\star n+\frac{1}{2}}\omega_{h}^{\star}dx+\int_{I_{j}}u_{h}^{\star n+\frac{1}{2}}(\omega_{h})_{x}dx-\Big[(\hat{u}^{\star n+\frac{1}{2}}\omega_{h}^{\star-})_{j+\frac{1}{2}}-(\hat{u}^{\star n+\frac{1}{2}}\omega_{h}^{\star+})_{j-\frac{1}{2}}\Big]=0.
\end{split}
\end{equation}

We introduce a short-hand notation
\begin{equation}
\begin{split}
\mathfrak{H}_{j}(u_{h}^{n},\psi_{h}^{n};\nu_{h},\omega_{h})&=i\int_{I_{j}}u_{h}^{n+1}\nu_{h}dx-i\int_{I_{j}}u_{h}^{n}\nu_{h}dx-\Delta t\int_{I_{j}}\psi_{h}^{n+\frac{1}{2}}\omega_{h}dx\\
&+\Delta t\int_{I_{j}}\Big(\psi_{h}^{n+\frac{1}{2}}(\nu_{h})_{x}-u_{h}^{n+\frac{1}{2}}Q_{h}\nu_{h}\Big)dx-\int_{I_{j}}u_{h}^{n+\frac{1}{2}}\nu_{h}\Delta \tilde{W}_{n}dx\\
&-\Delta t\int_{I_{j}}u_{h}^{n+\frac{1}{2}}(\omega_{h})_{x}dx-\Delta t\Big[(\hat{\psi}^{n+\frac{1}{2}}\nu_{h}^{-})_{j+\frac{1}{2}}-(\hat{\psi}^{n+\frac{1}{2}}\nu_{h}^{+})_{j-\frac{1}{2}}\Big]\\
&+\Delta t\Big[(\hat{u}^{n+\frac{1}{2}}\omega_{h}^{-})_{j+\frac{1}{2}}-(\hat{u}^{n+\frac{1}{2}}\omega_{h}^{+})_{j-\frac{1}{2}}\Big].
\end{split}
\end{equation}
Then from (\ref{2.146}), we also have the expression of $\mathfrak{H}_{j}^{\star}(u_{h}^{n},\psi_{h}^{n};\nu_{h},\omega_{h})$.
If we take $\nu_{h}=u_{h}^{\star n+\frac{1}{2}},~\omega_{h}=\psi_{h}^{\star n+\frac{1}{2}}$ in both functions $\mathfrak{H}_{j}(u_{h}^{n},\psi_{h}^{n};\nu_{h},\omega_{h})$ and $\mathfrak{H}_{j}^{\star}(u_{h}^{n},\psi_{h}^{n};\nu_{h},\omega_{h})$, both functions are zero. Hence we obtain
\begin{equation}\label{HH}
\mathfrak{H}_{j}(u_{h}^{n},\psi_{h}^{n};u_{h}^{\star n+\frac{1}{2}},\psi_{h}^{\star n+\frac{1}{2}})-\mathfrak{H}_{j}^{\star}(u_{h}^{n},p_{h}^{n};u_{h}^{\star n+\frac{1}{2}},\psi_{h}^{\star n+\frac{1}{2}})=0.
\end{equation}

With (\ref{flux}) of the numerical fluxes, then (\ref{HH}) becomes
\begin{equation}\label{19}
\begin{split}
i&\int_{I_{j}}\Big(|u_{h}^{n+1}|^{2}-|u_{h}^{n}|^{2}\Big)dx+\underbrace{\Delta t\int_{I_{j}}\Big(\psi_{h}^{n+\frac{1}{2}}(u_{h}^{\star n+\frac{1}{2}})_{x}+u_{h}^{\star n+\frac{1}{2}}(\psi_{h}^{n+\frac{1}{2}})_{x}\Big)dx}_{A}\\
&-\underbrace{\Delta t\int_{I_{j}}\Big(\psi_{h}^{\star n+\frac{1}{2}}(u_{h}^{n+\frac{1}{2}})_{x}+u_{h}^{n+\frac{1}{2}}(\psi_{h}^{\star n+\frac{1}{2}})_{x}\Big)dx}_{B}
-\underbrace{\Delta t\Big[(\psi_{h}^{n+\frac{1}{2}+}u_{h}^{\star n+\frac{1}{2}-})_{j+\frac{1}{2}}-(\psi_{h}^{\star n+\frac{1}{2}+}u_{h}^{n+\frac{1}{2}-})_{j+\frac{1}{2}}\Big]}_{C}\\
&+\underbrace{\Delta t\Big[(u_{h}^{n+\frac{1}{2}-}\psi_{h}^{\star n+\frac{1}{2}-})_{j+\frac{1}{2}}-(u_{h}^{\star n+\frac{1}{2}-}\psi_{h}^{n+\frac{1}{2}-})_{j+\frac{1}{2}}\Big]}_{D}
+\underbrace{\Delta t\Big[(\psi_{h}^{n+\frac{1}{2}+}u_{h}^{\star n+\frac{1}{2}+})_{j-\frac{1}{2}}-(\psi_{h}^{\star n+\frac{1}{2}+}u_{h}^{n+\frac{1}{2}+})_{j-\frac{1}{2}}\Big]}_{G}\\
&-\underbrace{\Delta t\Big[(u_{h}^{n+\frac{1}{2}-}\psi_{h}^{\star n+\frac{1}{2}+})_{j-\frac{1}{2}}-(u_{h}^{\star n+\frac{1}{2}-}\psi_{h}^{n+\frac{1}{2}+})_{j-\frac{1}{2}}\Big]}_{H}=0.
\end{split}
\end{equation}

By the chain of rule, we can derive
\begin{equation*}
\begin{split}
A=\Delta t\int_{I_{j}}(\psi_{h}^{n+\frac12}u_{h}^{\star n+\frac{1}{2}})_{x}dx=
\Delta t\Big[(\psi_{h}^{n+\frac{1}{2} -}u_{h}^{\star n+\frac{1}{2}-})_{j+\frac{1}{2}}-(\psi_{h}^{n+\frac{1}{2}+}u_{h}^{\star n+\frac{1}{2}+})_{j-\frac{1}{2}}\Big],\\
B=\Delta t\int_{I_{j}}(\psi_{h}^{\star n+\frac{1}{2}}u_{h}^{n+\frac12})_{x}dx=
\Delta t\Big[(u_{h}^{n+\frac{1}{2}-}\psi_{h}^{\star n+\frac{1}{2}-})_{j+\frac{1}{2}}-(u_{h}^{n+\frac{1}{2}+}\psi_{h}^{\star n+\frac{1}{2}+})_{j-\frac{1}{2}}\Big],
\end{split}
\end{equation*}
then
\begin{equation}\label{AB}
A-B=2i\Delta t\Big[\textrm{Im}(\psi_{h}^{n+\frac{1}{2}-}u_{h}^{\star n+\frac{1}{2}-})_{j+\frac{1}{2}}-\textrm{Im}(\psi_{h}^{n+\frac{1}{2}+}u_{h}^{\star n+\frac{1}{2}+})_{j-\frac{1}{2}}\Big].
\end{equation}

After some simple algebraic manipulation $a-a^{\star}=2i\textrm{Im}(a),~a\in \mathcal{C}$, we have
\begin{equation}\label{CD}
\begin{split}
C&=2i\Delta t\textrm{Im}(\psi_{h}^{n+\frac{1}{2}+}u_{h}^{\star n+\frac{1}{2}-})_{j+\frac{1}{2}},~~D=-2i\Delta t\textrm{Im}(\psi_{h}^{n+\frac{1}{2}-}u_{h}^{\star n+\frac{1}{2}-})_{j+\frac{1}{2}},\\
G&=2i\Delta t\textrm{Im}(\psi_{h}^{n+\frac{1}{2}+}u_{h}^{\star n+\frac{1}{2}+})_{j-\frac{1}{2}},~~H=-2i\Delta t\textrm{Im}(\psi_{h}^{n+\frac{1}{2}+}u_{h}^{\star n+\frac{1}{2}-})_{j-\frac{1}{2}}.
\end{split}
\end{equation}

We combine all these equalities (\ref{19}), (\ref{AB}) and (\ref{CD}) to obtain
\begin{equation*}
\int_{I_{j}}(|u_{h}^{n+1}|^{2}-|u_{h}|^{n})dx+\hat{\Phi}^{n+\frac{1}{2}}_{j+\frac{1}{2}}-\hat{\Phi}^{n+\frac{1}{2}}_{j-\frac{1}{2}}=0,
\end{equation*}
where the numerical entropy flux is given by
\begin{equation*}
\hat{\Phi}^{n+\frac{1}{2}}=-2\Delta t\textrm{Im}(\psi_{h}^{n+\frac{1}{2}+}u_{h}^{\star n+\frac{1}{2}-}).
\end{equation*}

Summing up over $j$, the flux terms vanish because of the periodic boundary conditions. Thus we finish the proof.
%\begin{equation*}
%\int_{L_{f}}^{L_{r}}|u_{h}^{n+1}|^{2}dx=\int_{L_{f}}^{L_{r}}|u_{h}^{n}|^{2}dx.
%\end{equation*}
%The proof is finished.
\end{proof}

\begin{corollary}
The discrete charge conservation law trivially implies an $L^{2}$-stability of the numerical solution.
\end{corollary}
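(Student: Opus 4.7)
The plan is straightforward: I would iterate the discrete charge conservation law \eqref{Discrete-charge} established in Theorem \ref{Chagge}. Applying
$$\int_{L_f}^{L_r}|u_h^{n+1}|^2\,dx = \int_{L_f}^{L_r}|u_h^{n}|^2\,dx$$
recursively for $n=0,1,\ldots,N-1$ immediately yields $\|u_h^n\|_{L^2}^2 = \|u_h^0\|_{L^2}^2$ for every $1\le n\le N$, pathwise (i.e.\ almost surely on $\Omega$).

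Next I would invoke the definition of $L^2$-stability, namely the requirement that the numerical solution satisfy an estimate of the form $\|u_h^n\|_{L^2}\le C\|u_h^0\|_{L^2}$ uniformly in $n$, $\Delta t$, and $h$. The identity derived in the first step gives exactly this bound with $C=1$; taking square roots of the iterated conservation law delivers the stability estimate instantly. If a mean-square version is desired, one simply takes expectations, but since the estimate is already pointwise in $\omega$ no extra step is needed.

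There is essentially no obstacle here: the heavy lifting was already carried out in Theorem \ref{Chagge}, where the careful choice of upwind--downwind fluxes in \eqref{Fluxes} made the boundary contributions cancel in pairs. The corollary merely repackages that conservation as a uniform norm bound. The only subtlety worth flagging is that the stability constant $1$ is independent of both $\Delta t$ and the mesh size $h$; in particular no CFL-type restriction arises, which is consistent with the implicit nature of the midpoint discretization in time.
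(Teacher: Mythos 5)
Your argument is correct and is exactly the reasoning the paper leaves implicit by calling the implication ``trivial'': iterating the identity of Theorem \ref{Chagge} gives $\|u_h^n\|_{{\mathbb L}^2}=\|u_h^0\|_{{\mathbb L}^2}$ pathwise, which is the ${\mathbb L}^2$-stability bound with constant $1$, uniformly in $n$, $\Delta t$ and $h$. No further comment is needed.
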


\section{Error estimates for the full-discrete method}
In this section, we will state the error estimate of the symplectic local discontinuous Galerkin method for the problem \eqref{NLS} with $d=1$. In the sequel, $\mathbb{E}$ denotes an expectation operator of a random variable, and $K,C$ are  constants depending on $\|Q\|_{{\mathbb H}^{3}}$, the finial time $T$ and the norm of $u_0$, but independent of $h$ and $n$. They may change from line to line.

In order to obtain the error estimate to the symplectic local discontinuous Galerkin method (\ref{LDG22}) with the fluxes (\ref{Fluxes}), we divide the error into two parts:
\begin{equation}
\|u(t_{n})-u_{h}^{n}\|^{2}\leq \underbrace{\|u(\cdot,t_{n})-u^{n}\|^{2}}_{\textrm{Temporal~error}}+\underbrace{\|u^{n}-u_{h}^{n}\|^{2}}_{\textrm{Spatial~error}}.
\end{equation}

\subsection{Temporal error}
To obtain the temporal error estimate, we need some regularity results of the numerical solution $u^{n}(x)$ for \eqref{time}. We state it in the following two lemmas. The proof of these lemmas will be given in Appendix A and Appendix B, respectively.
\begin{lemma}\label{th4}
  Assume that  $Q\in{\mathbb H}^{\gamma}$ and $\mathbb{E}\|u^{0}\|_{{\mathbb H}^{\gamma}}^{2p}<\infty,\;\gamma=0,1,\cdots$ and $\phi\in\mathcal{L}_{2}({\mathbb L}^2;{\mathbb H}^{\gamma})$. We have the following regularity of temporal semi-discretization, i.e., for $p\geq 1$,
 \begin{equation}
 \mathbb{E}\|u^n\|_{{\mathbb H}^{\gamma}}^{2p}\leq K,~~n=1,2,...,N.
 \end{equation}
\end{lemma}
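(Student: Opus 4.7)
The proof naturally splits on $\gamma$. For the base case $\gamma=0$, the discrete charge conservation law for the temporal semi-discretization gives $\|u^{n}\|_{{\mathbb L}^{2}}=\|u^{0}\|_{{\mathbb L}^{2}}$ pathwise, so $\mathbb{E}\|u^{n}\|_{{\mathbb L}^{2}}^{2p}=\mathbb{E}\|u^{0}\|_{{\mathbb L}^{2}}^{2p}\leq K$ holds for every $n$ and every $p\geq 1$ by the hypothesis on $u^{0}$.

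For $\gamma\geq 1$ I would argue by induction on $\gamma$, setting $v^{n}:=\partial_{x}^{\gamma}u^{n}$. Since $\partial^{\gamma}$ commutes with $\Delta$ but not with multiplication by $Q(x)$ or $\Delta\tilde W_{n}(x)$, applying $\partial_{x}^{\gamma}$ to the midpoint scheme \eqref{Com} leaves the Laplacian term unchanged and produces commutator remainders
\[
F^{n+\frac{1}{2}}:=[\partial_{x}^{\gamma},Q]\,u^{n+\frac{1}{2}},\qquad G^{n+\frac{1}{2}}:=[\partial_{x}^{\gamma},\Delta\tilde W_{n}]\,u^{n+\frac{1}{2}},
\]
each a Leibniz sum involving only derivatives of $u^{n+\frac{1}{2}}$ of order at most $\gamma-1$. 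Pairing the resulting equation for $v^{n+1}-v^{n}$ with $v^{n+\frac{1}{2}}$ in the complex ${\mathbb L}^{2}$ inner product and taking the real part---exactly as in the proof of Theorem \ref{Chagge}---yields the discrete energy identity
\begin{equation*}
\|v^{n+1}\|^{2}-\|v^{n}\|^{2}=-2\Delta t\,\mathrm{Im}\langle F^{n+\frac{1}{2}},v^{n+\frac{1}{2}}\rangle+2\,\mathrm{Im}\langle G^{n+\frac{1}{2}},v^{n+\frac{1}{2}}\rangle,
\end{equation*}
since the contributions of $\Delta v^{n+\frac{1}{2}}$, $Q\,v^{n+\frac{1}{2}}$ and $v^{n+\frac{1}{2}}\Delta\tilde W_{n}$ are real against $v^{n+\frac{1}{2}}$ by self-adjointness and the reality of $Q$ and $\tilde W$.

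The deterministic commutator is routine: Cauchy--Schwarz, Leibniz and Young give $\Delta t\,|\mathrm{Im}\langle F^{n+\frac{1}{2}},v^{n+\frac{1}{2}}\rangle|\leq C\Delta t(\|u^{n+\frac{1}{2}}\|_{{\mathbb H}^{\gamma-1}}^{2}+\|v^{n+\frac{1}{2}}\|^{2})$, and the inductive hypothesis at level $\gamma-1$ makes this Gronwall-compatible. The stochastic commutator is the crux. A naive estimate based on $\mathbb{E}\|\Delta\tilde W_{n}\|_{{\mathbb H}^{\gamma}}^{2}\leq C\Delta t\,\|\phi\|_{{\mathcal L}_{2}({\mathbb L}^{2},{\mathbb H}^{\gamma})}^{2}$ and Cauchy--Schwarz only produces an $O(\sqrt{\Delta t})$ bound, which does \emph{not} close the Gronwall inequality. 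To recover the full $O(\Delta t)$ factor I would split $v^{n+\frac{1}{2}}=v^{n}+\tfrac{1}{2}(v^{n+1}-v^{n})$ (and analogously $u^{n+\frac{1}{2}}$), and then use the scheme \eqref{Com} to substitute for each difference: the piece of $\mathrm{Im}\langle G^{n+\frac{1}{2}},v^{n}\rangle$ linear in $\Delta\tilde W_{n}$ has vanishing conditional expectation given ${\mathcal F}_{t_{n}}$, its quadratic-in-$\Delta\tilde W_{n}$ remainder is $O(\Delta t)$ in $L^{2}(\Omega)$ by property (ii) of \eqref{trunW}, and the extra factor $v^{n+1}-v^{n}=O(\sqrt{\Delta t})$ converts the remaining half-power into a full $\Delta t$.

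Taking expectation, summing in $n$ and applying discrete Gronwall then yields $\mathbb{E}\|v^{n}\|^{2}\leq K$ uniformly in $n$. To upgrade from the second moment to a general $2p$-th moment, I would raise the same energy identity to the $p$-th power and handle the cross terms between the deterministic $O(\Delta t)$ increment and the martingale-like stochastic increment by a discrete Burkholder--Davis--Gundy-type inequality, again closing with Gronwall and the inductive hypothesis. The \emph{main obstacle} I foresee is exactly this stochastic-commutator step: one has to exploit the conditional independence of $\Delta\tilde W_{n}$ from ${\mathcal F}_{t_{n}}$ together with the fixed-point structure of the implicit midpoint rule in order to squeeze $O(\Delta t)$ out of the naive $O(\sqrt{\Delta t})$, and then iterate this trick consistently when passing to higher moments.
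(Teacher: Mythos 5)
Your route is genuinely different from the paper's: the paper never differentiates the scheme, but instead writes the midpoint rule in discrete mild form $u^{n+1}=\hat S_{\Delta t}u^{n}-i\Delta t\,T_{\Delta t}Qu^{n+\frac12}-iT_{\Delta t}u^{n+\frac12}\Delta\tilde W_{n}$ with the Cayley propagator $\hat S_{\Delta t}=(I+i\tfrac{\Delta t}{2}\partial_{xx})^{-1}(I-i\tfrac{\Delta t}{2}\partial_{xx})$, which is unitary on every ${\mathbb H}^{\gamma}$, iterates to a discrete Duhamel sum, and resolves the implicitness of the noise term by the algebraic splitting $u^{\ell-\frac12}=\tfrac12(\hat S_{\Delta t}+I)u^{\ell-1}+\tfrac12(u^{\ell}-\hat S_{\Delta t}u^{\ell-1})$, independence of $u^{\ell-1}$ from $\Delta\tilde W_{\ell-1}$, and a fixed-point absorption made possible by the truncation $K\Delta t\kappa^{2}\le\tfrac12$. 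Your $\gamma=0$ base case via pathwise charge conservation is correct and in fact cleaner than the paper's treatment of that case.

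For $\gamma\ge 1$, however, your energy argument has a genuine gap at exactly the cross term you identify as the crux. After splitting $v^{n+\frac12}=v^{n}+\tfrac12(v^{n+1}-v^{n})$, you must control $\mathrm{Im}\langle G^{n+\frac12},v^{n+1}-v^{n}\rangle$ with $v^{n+1}-v^{n}=\partial_{x}^{\gamma}(u^{n+1}-u^{n})$. Your claim that ``the extra factor $v^{n+1}-v^{n}=O(\sqrt{\Delta t})$ converts the remaining half-power into a full $\Delta t$'' is not available: the H\"older continuity (Lemma \ref{th66}) gives $u^{n+1}-u^{n}=O(\sqrt{\Delta t})$ only in ${\mathbb H}^{\gamma-1}$, i.e.\ $v^{n+1}-v^{n}=O(\sqrt{\Delta t})$ in ${\mathbb H}^{-1}$, one derivative short, and upgrading to ${\mathbb H}^{\gamma}$ would require the ${\mathbb H}^{\gamma+1}$ bound you do not have. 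Your proposed remedy --- substitute the scheme for the increment --- reintroduces the dispersive part $-i\Delta t\,\partial_{x}^{\gamma}\Delta u^{n+\frac12}$, which lives two derivatives above anything controlled; unlike the mass and potential terms it does not cancel when paired with $G^{n+\frac12}$, and integrating by parts to unload it either lands on $\partial_{x}^{\gamma+1}u^{n+\frac12}$ or puts an extra derivative on $\Delta\tilde W_{n}$, exceeding the hypothesis $\phi\in\mathcal L_{2}({\mathbb L}^{2};{\mathbb H}^{\gamma})$. A plain Cauchy--Schwarz/Young treatment of this term also fails: the optimal balance of $\epsilon\|v^{n+1}-v^{n}\|^{2}+\epsilon^{-1}\|G^{n+\frac12}\|^{2}$ over $N=T/\Delta t$ steps still diverges like $\Delta t^{-1/2}$. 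This is precisely the obstruction the paper's mild formulation is designed to avoid, since there the Laplacian is packaged inside the ${\mathbb H}^{\gamma}$-unitary operator $\hat S_{\Delta t}$ and never appears as an unbounded factor in an estimate. (Two lesser issues: your use of Lemma \ref{th66} inside this proof is circular, since that lemma presupposes the ${\mathbb H}^{\gamma}$ bound being proven --- fixable by a simultaneous induction but it should be said; and the quadratic-in-noise remainder needs $\mathbb E\|\Delta\tilde W_{n}\|_{{\mathbb H}^{\gamma}}^{4}\le K\Delta t^{2}$, not property (ii) of \eqref{trunW}, which concerns $(\Delta\tilde W_{n})^{2}-(\Delta W_{n})^{2}$.)
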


\begin{lemma}\label{th66}
  Given $\gamma=1,2,\cdots$, and assume  $Q\in{\mathbb H}^{\gamma}$, $u^{0}\in L^{2p}(\Omega;{\mathbb H}^{\gamma})$ and $\phi\in\mathcal{L}_{2}({\mathbb L}^2;{\mathbb H}^{\gamma})$, then we have holder continuity in temporal direction, i.e., for $p\geq 1$,
  \begin{equation*}
  \mathbb{E}\|u^{n+1}-u^{n}\|_{{\mathbb H}^{\gamma-1}}^{2p}\leq K\Delta t^{p},~~n=1,2,...,N.
  \end{equation*}
\end{lemma}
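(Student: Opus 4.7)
I plan to argue directly from the one-step rearrangement of scheme~\eqref{Com},
\begin{equation*}
u^{n+1} - u^n = i\Delta t\bigl(\Delta u^{n+\frac12} + Q u^{n+\frac12}\bigr) - i u^{n+\frac12}\Delta\tilde W_n,
\end{equation*}
by taking the $\mathbb H^{\gamma-1}$-norm, raising to the $2p$-th power, taking expectation, and applying $(a+b+c)^{2p}\leq 3^{2p-1}(a^{2p}+b^{2p}+c^{2p})$. This reduces the task to controlling three pieces: a Laplacian contribution $\Delta t^{2p}\,\mathbb E\|\Delta u^{n+\frac12}\|_{\mathbb H^{\gamma-1}}^{2p}$, a potential contribution $\Delta t^{2p}\,\mathbb E\|Q u^{n+\frac12}\|_{\mathbb H^{\gamma-1}}^{2p}$, and a stochastic contribution $\mathbb E\|u^{n+\frac12}\Delta\tilde W_n\|_{\mathbb H^{\gamma-1}}^{2p}$.

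For the two deterministic pieces the prefactor $\Delta t^{2p}\leq \Delta t^p$ is already favourable: the Laplacian term is handled via $\|\Delta v\|_{\mathbb H^{\gamma-1}}\leq\|v\|_{\mathbb H^{\gamma+1}}$ combined with Lemma~\ref{th4} applied one regularity level higher to bound $\mathbb E\|u^{n+\frac12}\|_{\mathbb H^{\gamma+1}}^{2p}$, and the potential term via the one-dimensional Sobolev algebra $\|Qv\|_{\mathbb H^{\gamma-1}}\leq C\|Q\|_{\mathbb H^\gamma}\|v\|_{\mathbb H^{\gamma-1}}$ combined directly with Lemma~\ref{th4}. For the stochastic piece I would use Sobolev multiplication $\|u\Delta\tilde W_n\|_{\mathbb H^{\gamma-1}}\leq C\|u\|_{\mathbb H^{\gamma-1}}\|\Delta\tilde W_n\|_{\mathbb H^\gamma}$ (valid for $\gamma\geq 1$ in 1-D via the embedding $\mathbb H^\gamma\hookrightarrow\mathbb L^\infty$), then Cauchy--Schwarz to separate the two factors in expectation:
\begin{equation*}
\mathbb E\bigl[\|u^{n+\frac12}\|_{\mathbb H^{\gamma-1}}^{2p}\|\Delta\tilde W_n\|_{\mathbb H^\gamma}^{2p}\bigr]\leq \bigl(\mathbb E\|u^{n+\frac12}\|_{\mathbb H^{\gamma-1}}^{4p}\bigr)^{1/2}\bigl(\mathbb E\|\Delta\tilde W_n\|_{\mathbb H^\gamma}^{4p}\bigr)^{1/2}.
\end{equation*}
Lemma~\ref{th4} bounds the first factor, and the expansion $\Delta\tilde W_n=\sum_k\sqrt{\Delta t}\,\zeta_{k,n}^\kappa\phi e_k$, together with $|\zeta_{k,n}^\kappa|\leq\kappa$, the mutual independence of the $\zeta_{k,n}^\kappa$ across $k$, and $\phi\in\mathcal L_2(\mathbb L^2;\mathbb H^\gamma)$, yields $\mathbb E\|\Delta\tilde W_n\|_{\mathbb H^\gamma}^{4p}\leq K\Delta t^{2p}$ by a Khintchine-type moment estimate. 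The stochastic piece is therefore of order $\Delta t^p$.

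The main anticipated obstacle is the Laplacian contribution: although the $\Delta t^{2p}$ prefactor is benign, controlling $\|\Delta u^{n+\frac12}\|_{\mathbb H^{\gamma-1}}$ genuinely requires two more derivatives of $u^{n+\frac12}$ than the target norm, so Lemma~\ref{th4} must be invoked at regularity index $\gamma+1$ rather than $\gamma$. An alternative route, should the data regularity not be raised, is to test the one-step identity in the $\mathbb H^{\gamma-1}$-inner product against $u^{n+1}-u^n$ itself and integrate by parts, transferring one derivative off $\Delta u^{n+\frac12}$ so that only $\mathbb H^\gamma$ regularity of $u^n$, $u^{n+1}$ enters, with the resulting cross term $\tfrac12(\|u^{n+1}\|_{\mathbb H^\gamma}^2-\|u^n\|_{\mathbb H^\gamma}^2)$-type structure absorbed into a favourable $O(\Delta t)$ bound. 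All remaining ingredients---Sobolev multiplication in 1-D, Cauchy--Schwarz, and the moment control for $\Delta\tilde W_n$ afforded by~\eqref{trunW}---are routine.
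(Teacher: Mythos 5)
Your primary route does not prove the lemma as stated: bounding the Laplacian contribution by $\Delta t^{2p}\,\mathbb{E}\|u^{n+\frac12}\|_{{\mathbb H}^{\gamma+1}}^{2p}$ forces you to invoke Lemma~\ref{th4} at regularity $\gamma+1$, i.e.\ to assume $u^0\in L^{2p}(\Omega;{\mathbb H}^{\gamma+1})$, $Q\in{\mathbb H}^{\gamma+1}$, $\phi\in\mathcal{L}_2({\mathbb L}^2;{\mathbb H}^{\gamma+1})$, whereas the hypotheses only provide the level $\gamma$. This is not a cosmetic issue: Theorem~\ref{the111} applies the lemma with $\gamma=k+2$ under exactly ${\mathbb H}^{k+2}$ data, so a proof demanding one extra derivative would break the main error estimate. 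The missing idea is the one the paper uses: write the step in propagator form, $u^{n+1}-u^{n}=(\hat{S}_{\Delta t}-I)u^{n}-i\Delta t\,T_{\Delta t}Qu^{n+\frac12}-iT_{\Delta t}u^{n+\frac12}\Delta\tilde{W}_{n}$ with $\hat{S}_{\Delta t}=(I+i\tfrac{\Delta t}{2}\partial_{xx})^{-1}(I-i\tfrac{\Delta t}{2}\partial_{xx})$, and use the half-order smoothing bound $\|\hat{S}_{\Delta t}-I\|_{\mathcal{L}({\mathbb H}^{\gamma},{\mathbb H}^{\gamma-1})}\leq K\Delta t^{1/2}$ (the Fourier symbol of $\hat{S}_{\Delta t}-I$ has modulus $\leq\min(2,\Delta t\,\xi^{2})\leq\sqrt{2\Delta t}\,|\xi|$). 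This trades exactly one derivative for a factor $\Delta t^{1/2}$, so the leading term already yields $K\Delta t^{p}$ for the $2p$-th moment using only Lemma~\ref{th4} at level $\gamma$. Your decomposition keeps $\Delta u^{n+\frac12}$ and $\Delta\tilde W_n u^{n+\frac12}$ as separate raw terms and therefore cannot access this gain.

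Your fallback (testing the one-step identity against $u^{n+1}-u^{n}$ in the ${\mathbb H}^{\gamma-1}$ inner product) is salvageable and would also close the regularity gap, but as written it is only a sketch, and the claimed cross-term structure is not quite right: after taking real parts the telescoping difference $\tfrac12(\|u^{n+1}\|_{{\mathbb H}^{\gamma}}^{2}-\|u^{n}\|_{{\mathbb H}^{\gamma}}^{2})$ cancels and what survives is $-\Delta t\,\mathrm{Im}\langle\nabla u^{n+1},\nabla u^{n}\rangle_{{\mathbb H}^{\gamma-1}}$, which is $O(\Delta t)$ by Lemma~\ref{th4} at level $\gamma$ but is not a telescoping quantity. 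On the stochastic term your treatment (Sobolev multiplication, Cauchy--Schwarz, and the fourth-moment bound for the truncated increment) is acceptable and in fact avoids the independence issue caused by $u^{n+\frac12}$ depending on $u^{n+1}$; the paper instead isolates the implicit part $(u^{n+1}-\hat S_{\Delta t}u^{n})\Delta\tilde W_{n}$ and absorbs it into the left-hand side using $K\Delta t\,\kappa^{2}\leq\tfrac12$ for small $\Delta t$, which is where the truncation of the Gaussian increments is actually needed.
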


Now we are in a position to establish an error estimate of the semi-discrete method (\ref{time}) by virtue of these two lemmas.
\begin{theorem}\label{Time}
Assume that $u_{0}\in L^2(\Omega; {\mathbb H}^3)$, $Q\in{\mathbb H}^{3}$ and $\phi\in\mathcal{L}_{2}({\mathbb L}^2;{\mathbb H}^{3})$ then it is of the mean-square order 1, i.e.,
\begin{equation*}
\Big(\mathbb{E}\|u(t_{n})-u^n\|_{{\mathbb L}^2}^2\Big)^{1/2}\leq K\Delta t.
\end{equation*}
\end{theorem}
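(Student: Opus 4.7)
The plan is a direct one-step comparison: derive an identity for $e^n := u(t_n)-u^n$, exploit the skew-Hermitian structure of $i\mathcal{A}$ with $\mathcal{A}:=\Delta+Q$ to annihilate the leading-order $O(1)$ terms, bound the local truncation errors via Lemmas~\ref{th4}--\ref{th66} and the It\^o isometry, and close by discrete Gronwall. First, I would rewrite \eqref{Com} as $u^{n+1}-u^n=i\Delta t\,\mathcal{A}u^{n+\frac{1}{2}}-iu^{n+\frac{1}{2}}\Delta\tilde W_n$, subtract the integral form $u(t_{n+1})-u(t_n)=-i\int_{t_n}^{t_{n+1}}\mathcal{A}u(s)\,ds-i\int_{t_n}^{t_{n+1}}u(s)\circ dW(s)$ of the exact solution, and add and subtract the midpoint average $\bar u(t_{n+\frac{1}{2}}):=\tfrac{1}{2}(u(t_n)+u(t_{n+1}))$ in each coefficient to obtain
\begin{equation*}
e^{n+1}-e^n=-i\Delta t\,\mathcal{A}\bar e^{n+\frac{1}{2}}+i\,\bar e^{n+\frac{1}{2}}\Delta\tilde W_n+R^d_n+R^s_n,
\end{equation*}
where $\bar e^{n+\frac{1}{2}}=\tfrac{1}{2}(e^n+e^{n+1})$, $R^d_n=-i\int_{t_n}^{t_{n+1}}\mathcal{A}\bigl(u(s)-\bar u(t_{n+\frac{1}{2}})\bigr)\,ds$ is the midpoint-quadrature residual and $R^s_n=-i\int_{t_n}^{t_{n+1}}u(s)\circ dW(s)+i\,\bar u(t_{n+\frac{1}{2}})\Delta\tilde W_n$ packages the Stratonovich-integral and the $\tilde W-W$ discrepancies.

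Next, I would test the identity against $\bar e^{n+\frac{1}{2}}$ in the complex ${\mathbb L}^2$ pairing and take twice its real part. The self-adjointness of $\mathcal{A}$ under periodic boundary conditions and the reality of $\tilde W$ make $\langle\mathcal{A}\bar e^{n+\frac{1}{2}},\bar e^{n+\frac{1}{2}}\rangle$ and $\int|\bar e^{n+\frac{1}{2}}|^2\Delta\tilde W_n$ real; the imaginary prefactors $\mp i$ then force these contributions onto the imaginary axis, so they are annihilated by $\mathrm{Re}$, leaving the charge-conservation-type identity
\begin{equation*}
\|e^{n+1}\|^2-\|e^n\|^2=2\,\mathrm{Re}\,\langle R^d_n+R^s_n,\bar e^{n+\frac{1}{2}}\rangle.
\end{equation*}
A Taylor expansion of $u(s)$ about $t_{n+\frac{1}{2}}$ together with the H\"older estimate of Lemma~\ref{th66} and the regularity $u\in{\mathbb H}^3$ from Lemma~\ref{th4} yields $\mathbb{E}\|R^d_n\|_{{\mathbb L}^2}^2\leq K\Delta t^3$. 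For $R^s_n$, converting the Stratonovich integral to It\^o exposes a mean-zero stochastic integral plus a bounded-variation drift proportional to $u\sum_k(\phi e_k)^2$; approximating $u(s)$ by $u(t_n)$ inside the It\^o integral and applying the It\^o isometry together with Lemma~\ref{th66} and the bounds~\eqref{trunW} gives $\mathbb{E}\|R^s_n\|_{{\mathbb L}^2}^2\leq K\Delta t^3$.

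Finally, I would take expectations, sum from $n=0$ to $m-1$, and invoke discrete Gronwall. The deterministic cross term is handled by Cauchy-Schwarz/Young, $|2\mathrm{Re}\,\langle R^d_n,\bar e^{n+\frac{1}{2}}\rangle|\leq \Delta t^{-1}\|R^d_n\|^2+\Delta t\|\bar e^{n+\frac{1}{2}}\|^2$, contributing $O(\Delta t^2)$ after summation together with a linear-in-$\mathbb{E}\|\bar e^{n+\frac{1}{2}}\|^2$ term absorbable by Gronwall. The stochastic cross term is the main obstacle: because $\bar e^{n+\frac{1}{2}}$ depends on $u^{n+1}$ it is not ${\mathcal F}_{t_n}$-measurable, so the It\^o isometry cannot be invoked in one shot; a naive Cauchy-Schwarz would lose half a power of $\Delta t$ and yield only order $\frac{1}{2}$. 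To recover the full order, I would decompose $\bar e^{n+\frac{1}{2}}=e^n+\tfrac{1}{2}(e^{n+1}-e^n)$: the piece paired with the ${\mathcal F}_{t_n}$-measurable $e^n$ isolates the martingale component of $R^s_n$, whose conditional expectation vanishes (with the bounded-variation correction absorbed by Cauchy-Schwarz), while the piece paired with $e^{n+1}-e^n$ is higher order since estimating the error identity componentwise in $\mathbb L^2$ and invoking the ${\mathbb H}^2$-regularity of $u$ and $u^n$ from Lemma~\ref{th4} gives $\mathbb{E}\|e^{n+1}-e^n\|^2=O(\Delta t)$. Discrete Gronwall then yields $\mathbb{E}\|e^m\|_{{\mathbb L}^2}^2\leq K\Delta t^2$, i.e.\ mean-square order one, which is precisely the "direct approach" distinguishing the argument from the black-box convergence theorem of~\citep{RefChu}.
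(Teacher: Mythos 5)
Your route is genuinely different from the paper's: the paper compares the mild (Duhamel) form of the exact solution with the iterated discrete form $u^{n+1}=\hat S_{\Delta t}^{n+1}u^0-i\Delta t\sum_\ell\hat S_{\Delta t}^{n+1-\ell}T_{\Delta t}Qu^{\ell-\frac12}-i\sum_\ell\hat S_{\Delta t}^{n+1-\ell}T_{\Delta t}u^{\ell-\frac12}\Delta\tilde W_{\ell-1}$, sums all local errors \emph{before} taking norms, and then applies the It\^o isometry to the accumulated martingale sum, so that $N$ increments of mean square $K\Delta t^3$ contribute $KT\Delta t^2$ in total. Your energy identity and the skew-adjointness cancellation are correct, but the step-by-step estimation of $2\,\mathrm{Re}\langle R^d_n+R^s_n,\bar e^{n+\frac12}\rangle$ does not reach order $1$ as written. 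First, the arithmetic of the deterministic cross term fails: with $\mathbb{E}\|R^d_n\|^2\leq K\Delta t^3$ (and this exponent is sharp, since $R^d_n$ equals $-i\mathcal{A}\int_{t_n}^{t_{n+1}}(\tfrac{t_n+t_{n+1}}{2}-r)\,du(r)$ whose martingale part genuinely has mean square of order $\Delta t^3$), the bound $\Delta t^{-1}\mathbb{E}\|R^d_n\|^2\leq K\Delta t^2$ per step sums over $N=T/\Delta t$ steps to $KT\Delta t$, not $O(\Delta t^2)$; this alone caps the result at order $\tfrac12$. The conditional-mean/martingale splitting you reserve for $R^s_n$ must therefore be applied to $R^d_n$ as well.

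Second, and more seriously, the decomposition $\bar e^{n+\frac12}=e^n+\tfrac12(e^{n+1}-e^n)$ does not close the argument either. Writing $M_n$ for the martingale part of $R^d_n+R^s_n$, the pairing with the adapted $e^n$ is fine, but $\mathbb{E}\,\mathrm{Re}\langle M_n,e^{n+1}-e^n\rangle$ estimated by Cauchy--Schwarz with your own bound $\mathbb{E}\|e^{n+1}-e^n\|^2=O(\Delta t)$ gives $K\Delta t^{3/2}\cdot\Delta t^{1/2}=K\Delta t^2$ per step, i.e.\ $K\Delta t$ after summation --- order $\tfrac12$ again; even the sharper bound $\mathbb{E}\|e^{n+1}-e^n\|^2\leq K\Delta t^2$ (available because $e^{n+1}-e^n$ is dominated by $\Delta t\,\mathcal{A}\bar e^{n+\frac12}$ and $e$ is a priori bounded in ${\mathbb H}^2$) only yields $K\Delta t^{5/2}$ per step, hence order $\tfrac34$. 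Removing this loss requires either a further adaptedness decomposition of $e^{n+1}-e^n$ itself, which forces you to pair $M_n$ against $\Delta t\,\mathcal{A}(e^{n+1}-e^n)$ and costs spatial regularity beyond the assumed ${\mathbb H}^3$, or the summation-before-norm structure of the paper's semigroup argument. A smaller point: your claim $\mathbb{E}\|R^s_n\|^2\leq K\Delta t^3$ is true, but not by ``approximating $u(s)$ by $u(t_n)$ inside the It\^o integral,'' which only gives $K\Delta t^2$; it relies on the exact cancellation between the iterated integral $\int\int dW\,dW=\tfrac12\big((\Delta W)^2-F_\phi\Delta t\big)$, the It\^o--Stratonovich drift $-\tfrac12\int uF_\phi\,d\tau$, and the $(\Delta W)^2$ term produced by the midpoint average --- precisely the cancellation the paper carries out explicitly in its term $\mathcal{D}$.
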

\begin{proof}
From \eqref{iterate} (see Appendix A) and \eqref{NLS}, it follows
\begin{align}\label{3}
   u^{n+1}=\hat{S}_{\Delta t}^{n+1}u^{0}-i\Delta t\sum_{\ell=1}^{n+1}\hat{S}_{\Delta t}^{n+1-\ell}T_{\Delta t}Qu^{\ell-\frac12}
    -i\sum_{\ell=1}^{n+1}\hat{S}_{\Delta t}^{n+1-\ell}T_{\Delta t}u^{\ell-\frac12}\Delta \tilde{W}_{\ell-1},
\end{align}
and
\begin{equation}\label{4}
\begin{split}
  u(t_{n+1})&=S(t_{n+1})u^{0}-i\int_{0}^{t_{n+1}}S(t_{n+1}-\tau)Qu(\tau)d\tau-i\int_{0}^{t_{n+1}}S(t_{n+1}-\tau)u(\tau)\circ dW(\tau)\\
  &=S(t_{n+1})u^0-i\sum_{\ell=1}^{n+1}\int_{t_{\ell-1}}^{t_{\ell}}S(t_{n+1}-\tau)Qu(\tau)d\tau-i\sum_{\ell=1}^{n+1}\int_{t_{\ell-1}}^{t_{\ell}}S(t_{n+1}-\tau)u(\tau) \circ dW(\tau).
  \end{split}
\end{equation}
Subtract \eqref{3} from \eqref{4} leads to
\begin{align*}
  u(t_{n+1})-u^{n+1}=&\underbrace{\Big(S(t_{n+1})-\hat{S}_{\Delta t}^{n+1}\Big)u^0}-i\sum_{\ell=1}^{n+1}\Big(\int_{t_{\ell-1}}^{t_{\ell}}S(t_{n+1}-\tau)Qu(\tau)d\tau-\Delta t\hat{S}_{\Delta t}^{n+1-\ell}T_{\Delta t}Qu^{\ell-\frac12}\Big)\\
  &-i\sum_{\ell=1}^{n+1}\Big(\int_{t_{\ell-1}}^{t_{\ell}}S(t_{n+1}-\tau)u(\tau) \circ dW(\tau)-\hat{S}_{\Delta t}^{n+1-\ell}T_{\Delta t}u^{\ell-\frac12}\Delta \tilde{W}_{\ell-1}\Big)\\
  =:&\mathcal{A}+\mathcal{B}+\mathcal{C}.
\end{align*}
We will estimate them separately.

$\bullet$ \textbf{The first term $\mathcal{A}$.}

From \citep{RefBouard4}, we know that $\|S(t_{n+1})-\hat{S}_{\Delta t}^{n+1}\|_{\mathcal{L}({\mathbb H}^3,{\mathbb L}^2)}\leq K\Delta t$. Thus, \[ \mathbb{E}\|\mathcal{A}\|_{{\mathbb L}^2}^2\leq K\mathbb{E}\|u^0\|_{{\mathbb H}^3}^2\Delta t^2\leq K\Delta t^2. \]

$\bullet$ \textbf{The second term $\mathcal{B}$.}

To estimate $\mathcal{B}$, we insert one term
\[ \pm i\sum_{\ell=1}^{n+1}\int_{t_{\ell-1}}^{t_{\ell}}S(t_{n+1}-r)Qu_{t_{\ell-1},u^{\ell-1}}(\tau)d\tau \]
into the expression of $\mathcal{B}$ and we have
\begin{align*}
  \mathcal{B}&=-i \sum_{\ell=1}^{n+1}\int_{t_{\ell-1}}^{t_{\ell}}S(t_{n+1}-\tau)Q\Big(u(\tau)-u_{t_{\ell-1},u^{\ell-1}}(\tau)\Big)d\tau
  \\
  &~~~~~~-i \sum_{\ell=1}^{n+1}\int_{t_{\ell-1}}^{t_{\ell}}\Big(S(t_{n+1}-r)Qu_{t_{\ell-1},u^{\ell-1}}(\tau)-\hat{S}_{\Delta t}^{n+1-\ell}T_{\Delta t}Qu^{\ell-\frac12}\Big)d\tau\\
  &=:\mathcal{B}^1+\mathcal{B}^2.
\end{align*}
To estimate term $\mathcal{B}^1$, we present the estimate of $u(\tau)-u_{t_{\ell-1},u^{\ell-1}}(\tau)$ by their expression,
\begin{align*}
  u(\tau)-u_{t_{\ell-1},u^{\ell-1}}(\tau)=&S(\tau-t_{\ell-1})(u(t_{\ell-1})-u^{\ell-1})\\
  &-i\int_{t_{\ell-1}}^{\tau}S(\tau-t_{\ell-1}-\rho)Q\big(u(\rho)-u_{t_{\ell-1},u^{\ell-1}}(\rho)\big)d\rho\\
  &-i\int_{t_{\ell-1}}^{\tau}S(\tau-t_{\ell-1}-\rho)\big(u(\rho)-u_{t_{\ell-1},u^{\ell-1}}(\rho)\big)\circ dW(\rho).
\end{align*}

Therefore, from Gronwall's inequality, we know $\mathbb{E}\|u(\tau)-u_{t_{\ell-1},u^{\ell-1}}(\tau)\|_{{\mathbb L}^2}^2\leq K\mathbb{E}\|u(t_{\ell-1})-u^{\ell-1}\|_{{\mathbb L}^2}^2$,
 and for term $B^1$
\[ \mathbb{E}\|\mathcal{B}^1\|_{{\mathbb L}^2}^2\leq K\Delta t\sum_{\ell=1}^{n+1}\|u(t_{\ell-1})-u^{\ell-1}\|_{{\mathbb L}^2}^2. \]
We split term $\mathcal{B}^2$ further as follows
\begin{align*}
 \mathcal{ B}^2=&-i \sum_{\ell=1}^{n+1}\int_{t_{\ell-1}}^{t_{\ell}}\Big(S(t_{n+1}-r)-\hat{S}_{\Delta t}^{n+1-\ell}T_{\Delta t}\Big)Qu_{t_{\ell-1},u^{\ell-1}}(\tau)d\tau
  \\
  &-i \sum_{\ell=1}^{n+1}\int_{t_{\ell-1}}^{t_{\ell}}\hat{S}_{\Delta t}^{n+1-\ell}T_{\Delta t}Q\Big(u_{t_{\ell-1},u^{\ell-1}}(\tau)-u^{\ell-1}\Big)d\tau\\
    &-i \Delta t\sum_{\ell=1}^{n+1}\hat{S}_{\Delta t}^{n+1-\ell}T_{\Delta t}Q\Big(u^{\ell}-u^{\ell-1}\Big)\\
    =:&\mathcal{B}_a^2+\mathcal{B}_b^2+\mathcal{B}_c^2.
\end{align*}
For term $B_a^2$, based on $\|S(t_{n})-\hat{S}_{\Delta t}^{n}\|_{\mathcal{L}({\mathbb H}^3;{\mathbb L}^2)}\leq K\Delta t$ , $\|I-T_{\Delta}\|_{\mathcal{L}({\mathbb H}^3;{\mathbb L}^2)}\leq K\Delta t$ and Lemma \ref{th4}, we have
\begin{equation*}
\mathbb{E}\|\mathcal{B}_a^2\|_{{\mathbb L}^2}^2\leq K\Delta t^2.
\end{equation*}
To estimate term $\mathcal{B}_b^2$, we insert the expression of $u_{t_{\ell-1},u^{\ell-1}}(\tau)-u^{\ell-1}$ into it and we have
\begin{align*}
  \mathcal{B}_b^2=&-i \sum_{\ell=1}^{n+1}\int_{t_{\ell-1}}^{t_{\ell}}\hat{S}_{\Delta t}^{n+1-\ell}T_{\Delta t}Q\Big[(S(\tau-t_{\ell-1})-I)u^{\ell-1}d\tau\\
  &-i\int_{t_{\ell-1}}^{\tau}S(\tau-\rho)(Q-\frac{i}{2})u_{t_{\ell-1},u^{\ell-1}}(\rho)d\rho\Big]d\tau\\
  &- \sum_{\ell=1}^{n+1}\int_{t_{\ell-1}}^{t_{\ell}}\hat{S}_{\Delta t}^{n+1-\ell}T_{\Delta t}Q\int_{t_{\ell-1}}^{\tau}S(\tau-\rho)u_{t_{\ell-1},u^{\ell-1}}(\rho)dW(\rho)d\tau.
\end{align*}
The estimate of the first term is similar to before and is bounded by $K\Delta t^2$.

Concerning the second term, we employ Fubini's theorem and It\^o isometry and Lemma \ref{th4},
\begin{align*}
  &\mathbb{E}\Big\|- \sum_{\ell=1}^{n+1}\int_{t_{\ell-1}}^{t_{\ell}}\hat{S}_{\Delta t}^{n+1-\ell}T_{\Delta t}Q\int_{t_{\ell-1}}^{\tau}S(\tau-\rho)u_{t_{\ell-1},u^{\ell-1}}(\rho)dW(\rho)d\tau\Big\|_{{\mathbb L}^2}^2\\
 & =\mathbb{E}\Big\|- \sum_{\ell=1}^{n+1}\int_{t_{\ell-1}}^{t_{\ell}}\hat{S}_{\Delta t}^{n+1-\ell}T_{\Delta t}Q\int_{\rho}^{t_{\ell}}S(\tau-\rho)u_{t_{\ell-1},u^{\ell-1}}(\rho)d\tau dW(\rho)\Big\|_{{\mathbb L}^2}^2\\
 & \leq K\Delta t^2.
\end{align*}
The estimate of term $\mathcal{B}_c^2$ is similar to that of term $\mathcal{B}_b^2$, by replacing the expression of $u^{\ell}-u^{\ell-1}$. Combining all the above inequalities, we obtain
the desired estimate of $\mathcal{B}$
\[ \mathbb{E}\|\mathcal{B}\|_{{\mathbb L}^2}^2\leq K\Delta t^2+ K\Delta t\sum_{\ell=1}^{n+1}\|u(t_{\ell-1})-u^{\ell-1}\|_{{\mathbb L}^2}^2.\]

$\bullet$ \textbf{The third term $\mathcal{C}$.}

 To estimate $\mathcal{C}$, we change Stratonovich integral into It\^o one with noting that $F_{\phi}=\sum_{\ell\in{\mathbb N}^{d}}\big(\phi e_{\ell}(x)\big)^2$,
 \begin{align*}
  \mathcal{ C}=&-\frac12\sum_{\ell=1}^{n+1}\int_{t_{\ell-1}}^{t_{\ell}}S(t_{n+1}-\tau)u(\tau)F_{\phi}d\tau\\
  & -i\sum_{\ell=1}^{n+1}\int_{t_{\ell-1}}^{t_{\ell}}S(t_{n+1}-\tau)u(\tau)dW(\tau)
   +i\sum_{\ell=1}^{n+1}\hat{S}_{\Delta t}^{n+1-\ell}T_{\Delta t}u^{\ell-\frac12}\Delta \tilde{W}_{\ell-1}.
 \end{align*}
 We split it further
 \begin{small}
 \begin{align*}
   \mathcal{C}=&-i\sum_{\ell=1}^{n+1}\int_{t_{\ell-1}}^{t_{\ell}}S(t_{n+1}-\tau)\Big(u(\tau)-u_{t_{\ell-1},u^{\ell-1}}(\tau)\Big)dW(\tau)
   -i\sum_{\ell=1}^{n+1}\int_{t_{\ell-1}}^{t_{\ell}}\Big(S(t_{n+1}-\tau)-\hat{S}_{\Delta t}^{n+1-\ell}T_{\Delta t}\Big)u_{t_{\ell-1},u^{\ell-1}}(\tau)dW(\tau)\\
   &
   -i\sum_{\ell=1}^{n+1}\int_{t_{\ell-1}}^{t_{\ell}}\hat{S}_{\Delta t}^{n+1-\ell}T_{\Delta t}\Big(u_{t_{\ell-1},u^{\ell-1}}(\tau)-u^{\ell-1}\Big)dW(\tau)
   +\frac{i}{2}\sum_{\ell=1}^{n+1}\hat{S}_{\Delta t}^{n+1-\ell}T_{\Delta t}\Big(u^{\ell}-u^{\ell-1}\Big)\Delta \tilde{W}_{\ell-1}\\
   &-\frac12\sum_{\ell=1}^{n+1}\int_{t_{\ell-1}}^{t_{\ell}}S(t_{n+1}-\tau)u(\tau)F_{\phi}d\tau+i\sum_{\ell=1}^{n+1}\hat{S}_{\Delta t}^{n+1-\ell}T_{\Delta t}u^{\ell-\frac12}\Big(\Delta \tilde{W}_{\ell-1}-\Delta W_{\ell-1}\Big).
 \end{align*}
 \end{small}

 By replacing the expressions of $u_{t_{\ell-1},u^{\ell-1}}(\tau)-u^{\ell-1}$ and $u^{\ell}-u^{\ell-1}$ into the above equation, we have
 \begin{small}
 \begin{equation}\label{6}
 \begin{split}
   \mathcal{C}=&-i\sum_{\ell=1}^{n+1}\int_{t_{\ell-1}}^{t_{\ell}}S(t_{n+1}-\tau)\Big(u(\tau)-u_{t_{\ell-1},u^{\ell-1}}(\tau)\Big)dW(\tau)
   -i\sum_{\ell=1}^{n+1}\int_{t_{\ell-1}}^{t_{\ell}}\Big(S(t_{n+1}-\tau)-\hat{S}_{\Delta t}^{n+1-\ell}T_{\Delta t}\Big)u_{t_{\ell-1},u^{\ell-1}}(\tau)dW(\tau)\\
  & -i\sum_{\ell=1}^{n+1}\int_{t_{\ell-1}}^{t_{\ell}}\hat{S}_{\Delta t}^{n+1-\ell}T_{\Delta t}\Big((S(\tau-t_{\ell-1})-I)u^{\ell-1}-i\int_{t_{\ell-1}}^{\tau}S(\tau-\rho)(Q-\frac{i}{2})u_{t_{\ell-1},u^{\ell-1}}(\rho)d\rho\Big)dW(\tau)\\
  &+\frac{i}{2}\sum_{\ell=1}^{n+1}\hat{S}_{\Delta t}^{n+1-\ell}T_{\Delta t}\Big((\hat{S}_{\Delta t}-I)u^{\ell-1}-i\Delta tT_{\Delta t}Qu^{\ell-\frac12}\Big)\Delta \tilde{W}_{\ell-1}
  -\frac12\sum_{\ell=1}^{n+1}\int_{t_{\ell-1}}^{t_{\ell}}S(t_{n+1}-\tau)\Big(u(\tau)-u_{t_{\ell-1},u^{\ell-1}}(\rho)\Big)F_{\phi}d\tau\\
  &-\sum_{\ell=1}^{n+1}\int_{t_{\ell-1}}^{t_{\ell}}\hat{S}_{\Delta t}^{n+1-\ell}T_{\Delta t}\int_{t_{\ell-1}}^{\tau}S(\tau-\rho)u_{t_{\ell-1},u^{\ell-1}}(\rho)dW(\rho)dW(\tau)
  +\frac12\sum_{\ell=1}^{n+1}\hat{S}_{\Delta t}^{n+1-\ell}T_{\Delta t}^2u^{\ell-\frac12}(\Delta \tilde{W}_{\ell-1})^2\\
  &-\frac12\sum_{\ell=1}^{n+1}\int_{t_{\ell-1}}^{t_{\ell}}S(t_{n+1}-\tau)u_{t_{\ell-1},u^{\ell-1}}(\rho)F_{\phi}d\tau+i\sum_{\ell=1}^{n+1}\hat{S}_{\Delta t}^{n+1-\ell}T_{\Delta t}u^{\ell-\frac12}\Big(\Delta \tilde{W}_{\ell-1}-\Delta W_{\ell-1}\Big).
 \end{split}
 \end{equation}
 \end{small}

 We pay more attention to the last three lines, denoted by $\mathcal{D}$, because other terms can be estimated as before, and are bounded by
 $K\Delta t^2+ K\Delta t\sum\limits_{\ell=1}^{n+1}\|u(t_{\ell-1})-u^{\ell-1}\|_{{\mathbb L}^2}^2$.

We have
 \begin{align*}
   &-\sum_{\ell=1}^{n+1}\int_{t_{\ell-1}}^{t_{\ell}}\hat{S}_{\Delta t}^{n+1-\ell}T_{\Delta t}\int_{t_{\ell-1}}^{\tau}S(\tau-\rho)u_{t_{\ell-1},u^{\ell-1}}(\rho)dW(\rho)dW(\tau)\\
   &=-\sum_{\ell=1}^{n+1}\int_{t_{\ell-1}}^{t_{\ell}}\hat{S}_{\Delta t}^{n+1-\ell}T_{\Delta t}\int_{t_{\ell-1}}^{\tau}\Big(S(\tau-\rho)-T_{\Delta t}\Big)u_{t_{\ell-1},u^{\ell-1}}(\rho)dW(\rho)dW(\tau)\\
   &-\sum_{\ell=1}^{n+1}\int_{t_{\ell-1}}^{t_{\ell}}\hat{S}_{\Delta t}^{n+1-\ell}T_{\Delta t}\int_{t_{\ell-1}}^{\tau}
   T_{\Delta t}\Big(u_{t_{\ell-1},u^{\ell-1}}(\rho)-u^{\ell-1}\Big)dW(\rho)dW(\tau)\\
   &-\sum_{\ell=1}^{n+1}\hat{S}_{\Delta t}^{n+1-\ell}T_{\Delta t}^2u^{\ell-1}\int_{t_{\ell-1}}^{t_{\ell}}\int_{t_{\ell-1}}^{\tau}dW(\rho)dW(\tau).
 \end{align*}
 We claim that the last term in the above equality has the form
 \begin{align*}
   \sum_{\ell=1}^{n+1}\hat{S}_{\Delta t}^{n+1-\ell}T_{\Delta t}^2u^{\ell-1}\int_{t_{\ell-1}}^{t_{\ell}}\int_{t_{\ell-1}}^{\tau}dW(\rho)dW(\tau)=\frac12\sum_{\ell=1}^{n+1}\hat{S}_{\Delta t}^{n+1-\ell}T_{\Delta t}^2u^{\ell-1}\Big((\Delta W_{\ell-1})^2-F_{\phi}\Delta t\Big).
 \end{align*}
 In fact,
 \begin{align}\label{eq1}
  & \sum_{\ell=1}^{n+1}\hat{S}_{\Delta t}^{n+1-\ell}T_{\Delta t}^2u^{\ell-1}\int_{t_{\ell-1}}^{t_{\ell}}\int_{t_{\ell-1}}^{\tau}dW(\rho)dW(\tau)\\
   &=\sum_{k_1,k_2\in{\mathbb N}}\sum_{\ell=1}^{n+1}\hat{S}_{\Delta t}^{n+1-\ell}T_{\Delta t}^2u^{\ell-1}\phi e_{k_1}\phi e_{k_2}\int_{t_{\ell-1}}^{t_{\ell}}\int_{t_{\ell-1}}^{\tau}d\beta_{k_1}(\rho)d\beta_{k_2}(\tau)\nonumber\\
   &=\sum_{k_1=k_2\in{\mathbb N}}\sum_{\ell=1}^{n+1}\hat{S}_{\Delta t}^{n+1-\ell}T_{\Delta t}^2u^{\ell-1}\phi e_{k_1}\phi e_{k_2}\int_{t_{\ell-1}}^{t_{\ell}}\int_{t_{\ell-1}}^{\tau}d\beta_{k_1}(\rho)d\beta_{k_1}(\tau)\nonumber\\
   &\quad+\sum_{k_1<k_2}\sum_{\ell=1}^{n+1}\hat{S}_{\Delta t}^{n+1-\ell}T_{\Delta t}^2u^{\ell-1}\phi e_{k_1}\phi e_{k_2}\int_{t_{\ell-1}}^{t_{\ell}}\int_{t_{\ell-1}}^{\tau}d\beta_{k_1}(\rho)d\beta_{k_2}(\tau)\nonumber\\
   &\quad+\sum_{k_1>k_2}\sum_{\ell=1}^{n+1}\hat{S}_{\Delta t}^{n+1-\ell}T_{\Delta t}^2u^{\ell-1}\phi e_{k_1}\phi e_{k_2}\int_{t_{\ell-1}}^{t_{\ell}}\int_{t_{\ell-1}}^{\tau}d\beta_{k_1}(\rho)d\beta_{k_2}(\tau)\nonumber\\
&=I+II+III.\nonumber
 \end{align}
 Due to $\int_{t_{\ell-1}}^{t_{\ell}}\int_{t_{\ell-1}}^{\tau}d\beta_{k_1}(\rho)d\beta_{k_1}(\tau)=\frac12\Big((\Delta\beta_{k_1})^2-\Delta t F_{\phi}\Big)$, we have
 $$
 I=\frac12\sum_{k_1=k_2\in{\mathbb N}}\sum_{\ell=1}^{n+1}\hat{S}_{\Delta t}^{n+1-\ell}T_{\Delta t}^2u^{\ell-1}\phi e_{k_1}\phi e_{k_2}\Big((\Delta\beta_{k_1})^2-\Delta t F_{\phi}\Big).
 $$
 We change the index of $k_1$ and $k_2$ in the last term of \eqref{eq1} to obtain
 $$
 III=\sum_{k_2>k_1}\sum_{\ell=1}^{n+1}\hat{S}_{\Delta t}^{n+1-\ell}T_{\Delta t}^2u^{\ell-1}\phi e_{k_2}\phi e_{k_1}\int_{t_{\ell-1}}^{t_{\ell}}\int_{t_{\ell-1}}^{\tau}d\beta_{k_2}(\rho)d\beta_{k_1}(\tau),
 $$
 and
 \begin{align*}
 II+III=&\sum_{k_1<k_2}\sum_{\ell=1}^{n+1}\hat{S}_{\Delta t}^{n+1-\ell}T_{\Delta t}^2u^{\ell-1}\phi e_{k_1}\phi e_{k_2}\left[\int_{t_{\ell-1}}^{t_{\ell}}\int_{t_{\ell-1}}^{\tau}d\beta_{k_1}(\rho)d\beta_{k_2}(\tau)
 +\int_{t_{\ell-1}}^{t_{\ell}}\int_{t_{\ell-1}}^{\tau}d\beta_{k_2}(\rho)d\beta_{k_1}(\tau)\right]\\
 =&\sum_{k_1<k_2}\sum_{\ell=1}^{n+1}\hat{S}_{\Delta t}^{n+1-\ell}T_{\Delta t}^2u^{\ell-1}\phi e_{k_1}\phi e_{k_2}\Delta\beta_{k_1}\Delta\beta_{k_2}.
 \end{align*}
 Combining them together we may prove the claim.

It follows from the rearrangement of the last three lines of \eqref{6} that,
 \begin{align*}
   \mathcal{D}=&-\sum_{\ell=1}^{n+1}\int_{t_{\ell-1}}^{t_{\ell}}\hat{S}_{\Delta t}^{n+1-\ell}T_{\Delta t}\int_{t_{\ell-1}}^{\tau}\Big(S(\tau-\rho)-T_{\Delta t}\Big)u_{t_{\ell-1},u^{\ell-1}}(\rho)dW(\rho)dW(\tau)\\
   &-\sum_{\ell=1}^{n+1}\int_{t_{\ell-1}}^{t_{\ell}}\hat{S}_{\Delta t}^{n+1-\ell}T_{\Delta t}\int_{t_{\ell-1}}^{\tau}
   T_{\Delta t}\Big(u_{t_{\ell-1},u^{\ell-1}}(\rho)-u^{\ell-1}\Big)dW(\rho)dW(\tau)\\
   &-\frac12\sum_{\ell=1}^{n+1}\hat{S}_{\Delta t}^{n+1-\ell}T_{\Delta t}^2 u^{\ell-1}\big((\Delta W_{\ell-1})^2-(\Delta\hat{W}_{\ell-1})^2\big)
   +\frac14\sum_{\ell=1}^{n+1}\hat{S}_{\Delta t}^{n+1-\ell}T_{\Delta t}^2 (u^{\ell}-u^{\ell-1})(\Delta\hat{W}_{\ell-1})^2 \\
   &-\frac12\sum_{\ell=1}^{n+1}\int_{t_{\ell-1}}^{t_{\ell}}S(t_{n+1}-\tau)u_{t_{\ell-1},u^{\ell-1}}(\rho)F_{\phi}d\tau
   +\frac12\sum_{\ell=1}^{n+1}\hat{S}_{\Delta t}^{n+1-\ell}T_{\Delta t}^2u^{\ell-1}F_{\phi}\Delta t\\
   &+i\sum_{\ell=1}^{n+1}\hat{S}_{\Delta t}^{n+1-\ell}T_{\Delta t}u^{\ell-\frac12}\Big(\Delta \tilde{W}_{\ell-1}-\Delta W_{\ell-1}\Big).
 \end{align*}
 The estimates of the first two lines come from It\^o isometry, and are bounded by $K\Delta t^2$. By the properties of the truncated Wiener process, the estimate of the last line is similar to that of $\mathcal{B}^2$, and is bounded also by $K\Delta t^2$.

 Combing all these analysis above, we obtain
 \begin{align*}
   \mathbb{E}\|u(t_{n+1})-u^{n+1}\|_{{\mathbb L}^2}^2\leq K\Delta t^2+K\Delta t\sum_{\ell=1}^{n+1}\|u(t_{\ell-1})-u^{\ell-1}\|_{{\mathbb L}^2}^2.
 \end{align*}
 Therefore, Gronwall's lemma leads to the assertion.
\end{proof}

\subsection{Spatial error}
We state the spatial error estimate of the symplectic local discontinuous Galerkin method (\ref{LDG22}) for the stochastic linear Schr\"odinger equation (\ref{NLS}).
\begin{theorem}\label{the111}
Assume $u_{0}\in  L^2(\Omega; {\mathbb H}^{k+2})$ and $\phi\in\mathcal{L}_{2}({\mathbb L}^2;{\mathbb H}^{k+2})$. Let $u_{h}^{n}$ be the numerical solution of the symplectic local discontinuous Galerkin method \eqref{LDG22}.
Then there exists a constant $h_{0}>0$ such that for $h\leq h_{0}$,
%for small enough $h$ there holds the following error estimate
\begin{equation}
\mathbb{E}\|u^{n}-u_{h}^{n}\|_{{\mathbb L}^2}^2\leq Ch^{2k+2}+C\Delta t^{-1}h^{2k+2}.
\end{equation}
\end{theorem}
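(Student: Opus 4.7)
The natural route is a projection-based energy estimate for the LDG scheme in space, adapted to the stochastic setting. First, I introduce a pair of Gauss--Radau projections $\mathcal{P}^\pm:{\mathbb L}^2\to V_h^k$ matched to the alternating flux choice \eqref{Fluxes}: $\mathcal{P}^-$ reproduces left traces $w^-|_{j+1/2}$, $\mathcal{P}^+$ reproduces right traces $w^+|_{j-1/2}$, and both satisfy the classical approximation bound $\|w-\mathcal{P}^\pm w\|_{{\mathbb L}^2}\le Ch^{k+1}\|w\|_{{\mathbb H}^{k+1}}$. Decompose
\[
u^n-u_h^n=\eta_u^n+\xi_u^n,\qquad \psi^n-\psi_h^n=\eta_\psi^n+\xi_\psi^n,
\]
where the $\eta$'s are the projection errors and the $\xi$'s lie in $V_h^k$. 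The approximation estimate together with Lemma \ref{th4} gives $\mathbb{E}\|\eta^n\|_{{\mathbb L}^2}^2\le Ch^{2k+2}$, so the whole task reduces to bounding $\mathbb{E}\|\xi_u^n\|_{{\mathbb L}^2}^2$.

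Because $u^n$ is smooth, it satisfies the LDG weak form \eqref{LDG22} with $Q_h$ replaced by $Q$ and numerical fluxes taken as the true traces; subtracting from \eqref{LDG22} yields error equations for $(\xi_r^n,\xi_p^n,\xi_s^n,\xi_q^n)$. Next I would test with the same quartet used in the proof of Theorem \ref{Chagge}, namely $(\nu_h,\omega_h,\alpha_h,\beta_h)=(\xi_r^{n+1/2},\xi_p^{n+1/2},\xi_s^{n+1/2},\xi_q^{n+1/2})$. The alternating-flux property of $\mathcal{P}^\pm$ annihilates the flux contributions involving $\eta$, while the telescoping identities $A-B$ and $C+G-D-H$ from that proof cancel all purely-$\xi$ contributions, including those multiplied by $\Delta\tilde W_n$. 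What remains is an identity of the form
\[
\|\xi_u^{n+1}\|_{{\mathbb L}^2}^2-\|\xi_u^n\|_{{\mathbb L}^2}^2=2\Delta t\,(\xi_u^{n+1/2},R_{\mathrm{det}}^n)+2(\xi_u^{n+1/2},R_{\mathrm{sto}}^n\Delta\tilde W_n),
\]
where $R_{\mathrm{det}}^n$ and $R_{\mathrm{sto}}^n$ depend only on the projection residuals $\eta^{n+1/2}$ and a benign $Q-Q_h$ mismatch. The deterministic part is handled by Young's inequality and the projection bound: $|(\xi_u^{n+1/2},R_{\mathrm{det}}^n)|\Delta t\le C\Delta t(h^{2k+2}+\|\xi_u^{n+1/2}\|_{{\mathbb L}^2}^2)$, which after summation and discrete Gronwall produces the $Ch^{2k+2}$ contribution.

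The core obstacle is bounding $\mathbb{E}\bigl|\sum_n(\xi_u^{n+1/2},R_{\mathrm{sto}}^n\Delta\tilde W_n)\bigr|$, because the midpoint rule is implicit and hence $u^{n+1/2}$ --- together with $R_{\mathrm{sto}}^n$ --- is not $\mathcal{F}_{t_n}$-adapted, so It\^o isometry cannot be applied directly. My strategy is to split
\[
\xi_u^{n+1/2}=\xi_u^n+\tfrac12(\xi_u^{n+1}-\xi_u^n),\qquad R_{\mathrm{sto}}^n=R_{\mathrm{sto}}^n(u^n)+\bigl(R_{\mathrm{sto}}^n(u^{n+1/2})-R_{\mathrm{sto}}^n(u^n)\bigr),
\]
and treat the four resulting cross terms separately. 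The adapted-adapted piece $(\xi_u^n,R_{\mathrm{sto}}^n(u^n)\Delta\tilde W_n)$ is a discrete martingale increment, and It\^o isometry, the truncation relations \eqref{trunW}, and $\|R_{\mathrm{sto}}^n(u^n)\|_{{\mathbb L}^2}\le Ch^{k+1}$ yield a bound of the form $C\Delta t\,\mathbb{E}\|\xi_u^n\|^2\,h^{2k+2}$ per step, absorbable by discrete Gronwall. The three non-adapted cross terms are controlled using the scheme relations and Lemma \ref{th66}, which provide increments $\|\xi_u^{n+1}-\xi_u^n\|_{{\mathbb L}^2}$ and $\|\eta_u^{n+1}-\eta_u^n\|_{{\mathbb L}^2}$ of mean-square order $\sqrt{\Delta t}\,h^{k+1}$; multiplied by $\Delta\tilde W_n$ they produce an $O(h^{2k+2})$ contribution per step, which accumulates over $N=T/\Delta t$ steps to exactly the $C\Delta t^{-1}h^{2k+2}$ term in the statement. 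A final application of discrete Gronwall to $\mathbb{E}\|\xi_u^n\|_{{\mathbb L}^2}^2$, combined with the $\eta$-bound via triangle inequality, concludes the proof.
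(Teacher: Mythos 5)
Your proposal follows essentially the same route as the paper: decompose the error into a projection residual plus a $V_h^k$ part, write the error equation, test with the midpoint values of the $V_h^k$ errors so that the charge-conservation cancellations of Theorem \ref{Chagge} remove the purely-discrete contributions (including the stochastic and $Q$ terms), bound the residual terms by Young's inequality and the projection estimate, handle the non-adaptedness of the midpoint by splitting $\xi^{n+1/2}$ and the residual into an $\mathcal{F}_{t_n}$-measurable part plus an increment controlled by Lemma \ref{th66}, and close with discrete Gronwall. One point is imprecise, though: your claimed identity $\|\xi_u^{n+1}\|^2-\|\xi_u^n\|^2=2\Delta t(\xi_u^{n+1/2},R_{\mathrm{det}}^n)+2(\xi_u^{n+1/2},R_{\mathrm{sto}}^n\Delta\tilde W_n)$ omits the term $\int(\eta_u^{n+1}-\eta_u^n)\,\xi_u^{n+1/2}\,dx$, which carries neither a $\Delta t$ nor a $\Delta\tilde W_n$ factor; it survives because $\mathcal{P}^-$ is not the $L^2$ projection, so $\eta_u$ is not orthogonal to $V_h^k$. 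In the paper this is term $I$, and after Young's inequality with weights $\Delta t^{-1}$ and $\Delta t$ together with Lemma \ref{th66} it contributes $Ch^{2k+2}$ per step, i.e.\ $C\Delta t^{-1}h^{2k+2}$ in total — so the $\Delta t^{-1}h^{2k+2}$ term is not produced solely by the stochastic cross terms as you state. Your tools (Lemma \ref{th66} plus weighted Young) handle this term identically, so the gap is one of bookkeeping rather than of method, but the residual identity and the attribution of the error contributions should be corrected.
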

\begin{proof}
We split the proof into two steps:
%\begin{itemize}
%\item[1.] we first obtain the error equation of the full-discrete method (\ref{LDG222});
%\item[2.] finally, upon using the standard approximation theory of projection operator and It\^{o} isometry, we show the main result of the spatial error estimate.
%\end{itemize}

\textbf{Step 1: The error equation.}

Notice that the method (\ref{LDG22}) is also satisfied when the numerical solutions $r_{h},p_{h},s_{h},q_{h}$ are replaced by the exact solutions $r,p=s_{x},s,q=s_{x}$. For each fixed $t_{n}$, we can obtain the cell error equation
\begin{equation}\label{errorequation}
\begin{split}
\mathfrak{B}_{j}&(r^{n}-r_{h}^{n},p^{n}-p_{h}^{n},s^{n}-s_{h}^{n},q^{n}-q_{h}^{n};\nu_{h},\omega_{h},\alpha_{h},\beta_{h})\\
&=\int_{I_{j}}[r^{n+1}-r_{h}^{n+1}]\nu_{h}dx-\int_{I_{j}}[r^{n}-r_{h}^{n}]\nu_{h}dx+\Delta t\int_{I_{j}}(p^{n+\frac{1}{2}}-p_{h}^{n+\frac{1}{2}})(\nu_{h})_{x}dx\\
&-\int_{I_{j}}(s^{n+\frac{1}{2}}-s_{h}^{n+\frac{1}{2}})\nu_{h}\Delta \tilde{W}_{n}dx-\Delta t\int_{I_{j}}(p^{n+\frac{1}{2}}-p_{h}^{n+\frac{1}{2}})\omega_{h}dx-\Delta t\int_{I_{j}}(s^{n+\frac{1}{2}}-s_{h}^{n+\frac{1}{2}})(\omega_{h})_{x}dx\\
&-\Delta t\int_{I_{j}}(s^{n+\frac{1}{2}}-s_{h}^{n+\frac{1}{2}})Q_{h}\nu_{h}dx+\Delta t\int_{I_{j}}(r^{n+\frac{1}{2}}-r_{h}^{n+\frac{1}{2}})Q_{h}\alpha_{h}dx\\
&-\Delta t\int_{I_{j}}(q^{n+\frac{1}{2}}-q_{h}^{n+\frac{1}{2}})(\alpha_{h})_{x}dx+\int_{I_{j}}[s^{n+1}-s_{h}^{n+1}]\alpha_{h}dx-\int_{I_{j}}[s^{n}-s_{h}^{n}]\alpha_{h}dx\\
&+\int_{I_{j}}(r^{n+\frac{1}{2}}-r_{h}^{n+\frac{1}{2}})\alpha_{h}\Delta \tilde{W}_{n}dx-\Delta t\int_{I_{j}}(q^{n+\frac{1}{2}}-q_{h}^{n+\frac{1}{2}})\beta_{h}dx-\Delta t\int_{I_{j}}(r^{n+\frac{1}{2}}-r_{h}^{n+\frac{1}{2}})(\beta_{h})_{x}dx\\
&-\Delta t[(p^{n+\frac{1}{2}}-\hat{p}^{n+\frac{1}{2}})\nu_{h}^{-}]_{j+\frac{1}{2}}+\Delta t[(p^{n+\frac{1}{2}}-\hat{p}^{n+\frac{1}{2}})\nu_{h}^{+}]_{j-\frac{1}{2}}+\Delta t[(s^{n+\frac{1}{2}}-\hat{s}^{n+\frac{1}{2}})\omega_{h}^{-}]_{j+\frac{1}{2}}\\
&-\Delta t[(s^{n+\frac{1}{2}}-\hat{s}^{n+\frac{1}{2}})\omega_{h}^{+}]_{j-\frac{1}{2}}+\Delta t[(q^{n+\frac{1}{2}}-\hat{q}^{n+\frac{1}{2}})\alpha_{h}^{-}]_{j+\frac{1}{2}}-\Delta t[(q^{n+\frac{1}{2}}-\hat{q}^{n+\frac{1}{2}})\alpha_{h}^{+}]_{j-\frac{1}{2}}\\
&+\Delta t[(r^{n+\frac{1}{2}}-\hat{r}^{n+\frac{1}{2}})\beta_{h}^{-}]_{j+\frac{1}{2}}-\Delta t[(r^{n+\frac{1}{2}}-\hat{r}^{n+\frac{1}{2}})\beta_{h}^{+}]_{j-\frac{1}{2}}=0
\end{split}
\end{equation}
for all $\nu_{h},\omega_{h},\alpha_{h},\beta_{h}\in V_{h}^{k}$.

Summing over $j$, the error equation becomes
\begin{equation}
\sum_{j=1}^{J}\mathfrak{B}_{j}(r^{n}-r_{h}^{n},p^{n}-p_{h}^{n},s^{n}-s_{h}^{n},q^{n}-q_{h}^{n};\nu_{h},\omega_{h},\alpha_{h},\beta_{h})=0
\end{equation}
for all $\nu_{h},\omega_{h},\alpha_{h},\beta_{h}\in V_{h}^{k}$.

Denoting
\begin{equation}
\begin{split}
&\varepsilon^{n}=\mathcal{P}^{-}r^{n}-r_{h}^{n},~\xi^{n}=\mathcal{P}q^{n}-q_{h}^{n},~\eta^{n}=\mathcal{P}^{-}s^{n}-s_{h}^{n},~\zeta^{n}=p_{h}^{n}-\mathcal{P}p^{n},\\
&\varepsilon_{e}^{n}=\mathcal{P}^{-}r^{n}-r^{n},~\xi_{e}^{n}=\mathcal{P}q^{n}-q^{n},~\eta_{e}^{n}=\mathcal{P}^{-}s^{n}-s^{n},~\zeta_{e}^{n}=p^{n}-\mathcal{P}p^{n},
\end{split}
\end{equation}
where ${\mathcal P}$ is the standard ${\mathbb L}^{2}$-projection of a function $\omega$ with $k+1$ continuous derivatives into space $V_{h}^{k}$, $\mathcal{P}^{-}$ is a special projector into $V_{h}^{k}$, which satisfies, for each $j$,
$$
\int_{I_{j}}(\mathcal{P}^{-}\omega(x)-\omega(x))\nu(x)dx=0,~~\forall \nu\in P^{k-1}(I_{j}),
$$
and $\mathcal{P}^{-}(\omega(x_{j+\frac{1}{2}}^{-}))=\omega(x_{j+\frac{1}{2}})$.
and taking the test functions
\begin{equation*}
\nu_{h}=\varepsilon^{n+\frac{1}{2}},~\omega_{h}=\xi^{n+\frac{1}{2}},~\alpha_{h}=\eta^{n+\frac{1}{2}},~\beta_{h}=\zeta^{n+\frac{1}{2}},
\end{equation*}
we obtain the important energy equality
\begin{equation}\label{energyequation}
\sum_{j=1}^{J}\mathfrak{B}_{j}(\varepsilon^{n}-\varepsilon_{e}^{n},\zeta_{e}^{n}-\zeta^{n},\eta^{n}-\eta_{e}^{n},\xi^{n}-\xi_{e}^{n};\varepsilon^{n+\frac{1}{2}},\xi^{n+\frac{1}{2}},\eta^{n+\frac{1}{2}},\zeta^{n+\frac{1}{2}})=0.
\end{equation}

Now, we shall prove the theorem by analyzing each terms of (\ref{energyequation}).

\textbf{Step 2: Proof of the main result.}

We consider the left-hand side of the energy equation (\ref{energyequation}). Using the linearity of $\mathfrak{B}_{j}$ with respect to its first group of arguments, we get
\begin{equation}\label{3.34}
\begin{split}
&\mathfrak{B}_{j}(\varepsilon^{n}-\varepsilon_{e}^{n},\zeta_{e}^{n}-\zeta^{n},\eta^{n}-\eta_{e}^{n},\xi^{n}-\xi_{e}^{n};\varepsilon^{n+\frac{1}{2}},\xi^{n+\frac{1}{2}},
\eta^{n+\frac{1}{2}},\zeta^{n+\frac{1}{2}})\\
&=\mathfrak{B}_{j}(\varepsilon^{n},-\zeta^{n},\eta^{n},\xi^{n};\varepsilon^{n+\frac{1}{2}},\xi^{n+\frac{1}{2}},\eta^{n+\frac{1}{2}},\zeta^{n+\frac{1}{2}})
-\mathfrak{B}_{j}(\varepsilon_{e}^{n},-\zeta_{e}^{n},\eta_{e}^{n},\xi_{e}^{n};\varepsilon^{n+\frac{1}{2}},\xi^{n+\frac{1}{2}},\eta^{n+\frac{1}{2}},\zeta^{n+\frac{1}{2}}).
\end{split}
\end{equation}

First, we consider the first term of the right-hand side in (\ref{3.34}), which yields
\begin{equation}\label{3.355}
\begin{split}
\mathfrak{B}_{j}(\varepsilon^{n},&-\zeta^{n},\eta^{n},\xi^{n};\varepsilon^{n+\frac{1}{2}},\xi^{n+\frac{1}{2}},\eta^{n+\frac{1}{2}},\zeta^{n+\frac{1}{2}})\\
&=\frac{1}{2}\int_{I_{j}}\Big((\varepsilon^{n+1})^{2}-(\varepsilon^{n})^{2}\Big)dx+\frac{1}{2}\int_{I_{j}}\Big((\eta^{n+1})^{2}-(\eta^{n})^{2}\Big)dx\\
&+\Delta t[(\zeta^{+}\varepsilon^{-})_{j+\frac{1}{2}}^{n+\frac{1}{2}}-(\zeta^{+}\varepsilon^{+})_{j-\frac{1}{2}}^{n+\frac{1}{2}}]+\Delta t[(\eta^{-}\xi^{-})_{j+\frac{1}{2}}^{n+\frac{1}{2}}-(\eta^{-}\xi^{+})_{j-\frac{1}{2}}^{n+\frac{1}{2}}]\\
&+\Delta t[(\xi^{+}\eta^{-})_{j+\frac{1}{2}}^{n+\frac{1}{2}}-(\xi^{+}\eta^{+})_{j-\frac{1}{2}}^{n+\frac{1}{2}}]+\Delta t[(\varepsilon^{-}\zeta^{-})_{j+\frac{1}{2}}^{n+\frac{1}{2}}-(\varepsilon^{-}\zeta^{+})_{j-\frac{1}{2}}^{n+\frac{1}{2}}]\\
&-\Delta t\underbrace{\int_{I_{j}}[(\eta\xi)_{x}^{n+\frac{1}{2}}+(\varepsilon\zeta)_{x}^{n+\frac{1}{2}}]dx}_{R}.
\end{split}
\end{equation}
From the integration by parts, we arrive at
\begin{equation}\label{3.3100}
\begin{split}
R=\Big[(\eta^{-}\xi^{-})_{j+\frac{1}{2}}^{n+\frac{1}{2}}-(\eta^{+}\xi^{+})_{j-\frac{1}{2}}^{n+\frac{1}{2}}\Big]+\Big[(\varepsilon^{-}\zeta^{-})_{j+\frac{1}{2}}^{n+\frac{1}{2}}-(\varepsilon^{+}\zeta^{+})_{j-\frac{1}{2}}^{n+\frac{1}{2}}\Big].
\end{split}
\end{equation}

Substituting (\ref{3.3100}) into (\ref{3.355}), we have
 \begin{equation}\label{3.35}
\begin{split}
\mathfrak{B}_{j}(\varepsilon^{n},&-\zeta^{n},\eta^{n},\xi^{n};\varepsilon^{n+\frac{1}{2}},\xi^{n+\frac{1}{2}},\eta^{n+\frac{1}{2}},\zeta^{n+\frac{1}{2}})\\
&=\frac{1}{2}\int_{I_{j}}\Big((\varepsilon^{n+1})^{2}-(\varepsilon^{n})^{2}\Big)dx+\frac{1}{2}\int_{I_{j}}\Big((\eta^{n+1})^{2}-(\eta^{n})^{2}\Big)dx+\Delta t[\hat{\Phi}_{j+\frac{1}{2}}^{n+\frac{1}{2}}-\hat{\Phi}_{j-\frac{1}{2}}^{n+\frac{1}{2}}],
\end{split}
\end{equation}
where $\hat{\Phi}=\xi^{+}\eta^{-}+\zeta^{+}\varepsilon^{-}$.

As for the second term of the right-hand side in (\ref{3.34}) , we have
\begin{equation}
\mathfrak{B}_{j}(\varepsilon_{e}^{n},-\zeta_{e}^{n},\eta_{e}^{n},\xi_{e}^{n};\varepsilon^{n+\frac{1}{2}},\xi^{n+\frac{1}{2}},\eta^{n+\frac{1}{2}},\zeta^{n+\frac{1}{2}})=I+II+III+IV+V,
\end{equation}
where
\begin{equation*}
\begin{split}
&I=\int_{I_{j}}(\varepsilon_{e}^{n+1}-\varepsilon_{e}^{n})\varepsilon^{n+\frac{1}{2}}dx+\int_{I_{j}}(\eta_{e}^{n+1}-\eta_{e}^{n})\eta^{n+\frac{1}{2}}dx,\\
&II=\Delta t\int_{I_{j}}\Big((\zeta_{e}\xi)^{n+\frac{1}{2}}-(\zeta_{e}\varepsilon_{x})^{n+\frac{1}{2}}-(\eta_{e}\xi_{x})^{n+\frac{1}{2}}-(\xi_{e}\eta_{x})^{n+\frac{1}{2}}\\
&~~~~~~~~~-(\xi_{e}\zeta)^{n+\frac{1}{2}}-(\varepsilon_{e}\zeta_{x})^{n+\frac{1}{2}}- Q_{h}(\eta_{e}\varepsilon)^{n+\frac{1}{2}}+ Q_{h}(\varepsilon_{e}\eta)^{n+\frac{1}{2}}\Big)dx,
\end{split}
\end{equation*}
\begin{equation*}
\begin{split}
&III=\int_{I_{j}}\varepsilon_{e}^{n+\frac{1}{2}}\eta^{n+\frac{1}{2}}\Delta \tilde{W}_{n}dx,\quad IV=-\int_{I_{j}}\eta_{e}^{n+\frac{1}{2}}\varepsilon^{n+\frac{1}{2}}\Delta \tilde{W}_{n}dx,\\
&V=\Delta t\Big[(\zeta_{e}^{+}\varepsilon^{-})_{j+\frac{1}{2}}^{n+\frac{1}{2}}-(\zeta_{e}^{+}\varepsilon^{+})_{j-\frac{1}{2}}^{n+\frac{1}{2}}-(\eta_{e}^{-}\xi^{-})_{j+\frac{1}{2}}^{n+\frac{1}{2}}+(\eta_{e}^{-}\xi^{+})_{j-\frac{1}{2}}^{n+\frac{1}{2}}\\
&~~~~~~~~~-(\xi_{e}^{e}\eta^{-})_{j+\frac{1}{2}}^{n+\frac{1}{2}}-(\xi_{e}^{+}\eta^{+})_{j-\frac{1}{2}}^{n+\frac{1}{2}}-(\varepsilon_{e}^{-}\zeta^{-})_{j+\frac{1}{2}}^{n+\frac{1}{2}}+(\varepsilon_{e}^{-}\zeta^{+})_{j-\frac{1}{2}}^{n+\frac{1}{2}}\Big].
\end{split}
\end{equation*}

By using the simple inequality $ab\leq \frac{a^{2}}{4}+b^{2}$, and the standard approximation theory (\ref{Projection}) on $\varepsilon^{e}$, and $\eta^{e}$, we have
\begin{align*}
I &\leq \|\varepsilon_{e}^{n+1}-\varepsilon_{e}^{n}\|_{{\mathbb L}^2(I_j)}\|\varepsilon^{n+\frac12}\|_{{\mathbb L}^2(I_j)}+
          \|\eta_{e}^{n+1}-\eta_{e}^{n}\|_{{\mathbb L}^2(I_j)}\|\eta^{n+\frac12}\|_{{\mathbb L}^2(I_j)}\\
          &\leq C\Delta t^{-1}\|\varepsilon_{e}^{n+1}-\varepsilon_{e}^{n}\|_{{\mathbb L}^2(I_j)}^2+C\Delta t \|\varepsilon^{n+\frac12}\|_{{\mathbb L}^2(I_j)}^2
        +C\Delta t^{-1}\|\eta_{e}^{n+1}-\eta_{e}^{n}\|_{{\mathbb L}^2(I_j)}^2+C\Delta t\|\eta^{n+\frac12}\|_{{\mathbb L}^2(I_j)}^2,
\end{align*}
where $\|\varepsilon_{e}^{n+1}-\varepsilon_{e}^{n}\|_{{\mathbb L}^2(I_j)}=\|\mathcal{P}^{-}(r^{n+1}-r^{n})-(r^{n+1}-r^{n})\|_{{\mathbb L}^2(I_j)}$ and
 $\|\eta_{e}^{n+1}-\eta_{e}^{n}\|_{{\mathbb L}^2(I_j)}=\|\mathcal{P}^{-}(s^{n+1}-s^{n})-(s^{n+1}-s^{n})\|_{{\mathbb L}^2(I_j)}$.
 It is well know that for any $\omega\in {\mathbb H}^{k+1}(\mathbb{R})$
\begin{equation}\label{Projection}
\|\breve{\omega}(x)\|_{{\mathbb L}^{2}}+h\|\breve{\omega}(x)\|_{{\mathbb L}^{\infty}}+\sqrt{h}\|\breve{\omega}(x)\|_{\Gamma_{h}}\leq C\|\omega\|_{{\mathbb H}^{k+1}}h^{k+1}
\end{equation}
where $\breve{\omega}=\mathcal{P}\omega-\omega$ or $\breve{\omega}=\mathcal{P}^{-}\omega-\omega$. The positive constant $C$ is independent of $h$, and $\Gamma_{h}$ is the usual $L^{2}$-norm on the cell interfaces of the mesh, which for this one-dimensional case is
$
\|\nu\|_{\Gamma_{h}}^{2}=\sum_{j=1}^{J}\Big((\nu^{-}_{j+\frac{1}{2}})^{2}+(\nu^{+}_{j-\frac{1}{2}})^{2}\Big).
$

  Summing over $j$ and taking
 expectation, utilizing the property of projection and the estimate of $\mathbb{E}\|r^{n+1}-r^{n}\|_{{\mathbb H}^{k+1}}^{2}$ (see Lemma \ref{th66} with $p=1$) and Lemma \ref{th4}, we have
 \begin{equation}
   \mathbb{E}(\sum_{j=1}^{J}I)\leq C\mathbb{E}\|u_0\|_{{\mathbb H}^{k+2}}^2 h^{2k+2}+C\Delta t \mathbb{E}\|\varepsilon^{n+\frac12}\|_{{\mathbb L}^2([L_f,L_r])}^2+C\Delta t \mathbb{E}\|\eta^{n+\frac12}\|_{{\mathbb L}^2([L_f,L_r])}^2.
 \end{equation}

From the property of the projections $\mathcal{P}$ and $\mathcal{P}^{-}$, it follows that all the terms in $II$ except the last two terms are actually zero. We can get the estimates for $II$ via Young's inequality and Lemma \ref{th4},
\begin{align*}
\mathbb{E}(\sum_{j=1}^{J}II)&\leq C\mathbb{E}\big(\|r^n\|_{{\mathbb H}^{k+2}}^2+\|s^n\|_{{\mathbb H}^{k+2}}^2\big)\Delta th^{2k+2}+\frac{\Delta t}{4}\mathbb{E}\|\varepsilon^{n+\frac12}\|_{{\mathbb L}^2([L_f,L_r])}^2+\frac{\Delta t}{4} \mathbb{E}\|\eta^{n+\frac12}\|_{{\mathbb L}^2([L_f,L_r])}^2\\
&\leq C\mathbb{E}\|u_0\|_{{\mathbb H}^{k+2}}^2\Delta th^{2k+2}+\frac{\Delta t}{4}\mathbb{E}\|\varepsilon^{n+\frac12}\|_{{\mathbb L}^2([L_f,L_r])}^2+\frac{\Delta t}{4} \mathbb{E}\|\eta^{n+\frac12}\|_{{\mathbb L}^2([L_f,L_r])}^2
\end{align*}

For the third term $III$, we have
\begin{equation*}
\begin{split}
\mathbb{E}(\sum_{j=1}^{J}III)&=\frac14 \mathbb{E}\int_{L_f}^{L_r}(\varepsilon_{e}^{n+1}-\varepsilon_{e}^{n})(\eta^{n+1}-\eta^{n})\Delta \tilde{W}_{n}dx\\
&+\frac12 \mathbb{E}\int_{L_f}^{L_r}(\varepsilon_{e}^{n+1}-\varepsilon_{e}^{n})\eta^{n}\Delta \tilde{W}_{n}dx
+\frac12 \mathbb{E}\int_{L_f}^{L_r}\varepsilon_{e}^{n}(\eta^{n+1}-\eta^{n})\Delta \tilde{W}_{n}dx\\
&=:III^a+III^b+III^c.
\end{split}
\end{equation*}
For term $III^a$, using Young's inequality, Lemma \ref{th4} and Lemma \ref{th66} with $p=2$, we have
\begin{align*}
  III^a&\leq \frac14\mathbb{E}\Big( \|\varepsilon_{e}^{n+1}-\varepsilon_{e}^{n}\|_{{\mathbb L}^2([L_f,L_r])}\|\eta^{n+1}-\eta^{n}\|_{{\mathbb L}^2([L_f,L_r])}\|\Delta \tilde{W}_{n}\|_{{\mathbb L}^{\infty}([L_f,L_r])}\Big)\\
  &\leq C\Delta t\mathbb{E} \|\eta^{n+1}-\eta^{n}\|_{{\mathbb L}^2([L_f,L_r])}^2+C\Delta t^{-1}\mathbb{E}\Big(\|\Delta \tilde{W}_{n}\|_{{\mathbb L}^{\infty}([L_f,L_r])}^2\|\varepsilon_{e}^{n+1}-\varepsilon_{e}^{n}\|_{{\mathbb L}^2([L_f,L_r])}^2\Big)\\
  &\leq C\Delta t\mathbb{E} \|\eta^{n+1}-\eta^{n}\|_{{\mathbb L}^2([L_f,L_r])}^2
+C\Delta t^{-1}\Big(h^{2k+2}\mathbb{E}\|\Delta \tilde{W}_{n}\|_{{\mathbb L}^{\infty}([L_f,L_r])}^4+h^{-(2k+2)}\mathbb{E}\|\varepsilon_{e}^{n+1}-\varepsilon_{e}^{n}\|_{{\mathbb L}^2([L_f,L_r])}^4\Big)\\
  &\leq C\Delta t\mathbb{E} \|\eta^{n+1}\|_{{\mathbb L}^2([L_f,L_r])}^2+C\Delta t\mathbb{E} \|\eta^{n}\|_{{\mathbb L}^2([L_f,L_r])}^2+C\Delta t h^{2k+2}.
\end{align*}
Similarly, for term $III^b$,
\begin{align*}
  III^b &\leq\frac12 \mathbb{E}\Big(\|\varepsilon_{e}^{n+1}-\varepsilon_{e}^{n}\|_{{\mathbb L}^2([L_f,L_r])}\|\eta^{n}\|_{{\mathbb L}^2([L_f,L_r])}\|\Delta \tilde{W}_{n}\|_{{\mathbb L}^{\infty}([L_f,L_r])}\Big)\\
  &\leq C\mathbb{E}\|\varepsilon_{e}^{n+1}-\varepsilon_{e}^{n}\|_{{\mathbb L}^2([L_f,L_r])}^2+C\mathbb{E}\Big(\|\eta^{n}\|_{{\mathbb L}^2([L_f,L_r])}^2\|\Delta \tilde{W}_{n}\|_{{\mathbb L}^{\infty}([L_f,L_r])}^2\Big)\\
  &\leq C\Delta t\mathbb{E} \|\eta^{n}\|_{{\mathbb L}^2([L_f,L_r])}^2+C\Delta th^{2k+2},
\end{align*}
and  for term $III^c$,
\begin{align*}
  III^c &\leq \frac12 \mathbb{E}\Big(\|\varepsilon_{e}^{n}\|_{{\mathbb L}^2([L_f,L_r])}\|\eta^{n+1}-\eta^{n}\|_{{\mathbb L}^2([L_f,L_r])}\|\Delta \tilde{W}_{n}\|_{{\mathbb L}^{\infty}([L_f,L_r])}\Big)\\
  &\leq C\Delta t\mathbb{E}\|\eta^{n+1}-\eta^{n}\|_{{\mathbb L}^2([L_f,L_r])}^2+C\Delta t^{-1}\mathbb{E}\Big(\|\varepsilon_{e}^{n}\|_{{\mathbb L}^2([L_f,L_r])}^2\|\Delta \tilde{W}_{n}\|_{{\mathbb L}^{\infty}([L_f,L_r])}^2\Big)\\
  &\leq C\Delta t\mathbb{E} \|\eta^{n+1}\|_{{\mathbb L}^2([L_f,L_r])}^2+C\Delta t\mathbb{E} \|\eta^{n}\|_{{\mathbb L}^2([L_f,L_r])}^2+C h^{2k+2},
\end{align*}
where in the last inequalities for the estimate of $III^b$ and $III^c$, we use the independent property of Wiener process.
The estimate of term $IV$ is similar as that of term $III$, so we omit the process here.

Finally, $V$ only contains flux difference terms which all vanish upon a summation in $j$.
Combining these together, we know that
\begin{equation*}
\begin{split}
 &\frac{1}{2}\mathbb{E}\Big( \|\varepsilon^{n+1}\|_{{\mathbb L}^2([L_f,L_r])}^2+\|\eta^{n+1}\|_{{\mathbb L}^2([L_f,L_r])}^2\Big)
 -\frac{1}{2}\mathbb{E}\Big(\|\varepsilon^{n}\|_{{\mathbb L}^2([L_f,L_r])}^2+\|\eta^{n}\|_{{\mathbb L}^2([L_f,L_r])}^2\Big)\\
 &\leq C\Delta t \mathbb{E}\|\varepsilon^{n+1}\|_{{\mathbb L}^2([L_f,L_r])}^2+C\Delta t \mathbb{E}\|\varepsilon^{n}\|_{{\mathbb L}^2([L_f,L_r])}^2\\
 &~~~~+C\Delta t \mathbb{E}\|\eta^{n+1}\|_{{\mathbb L}^2([L_f,L_r])}^2+C\Delta t \mathbb{E}\|\eta^{n}\|_{{\mathbb L}^2([L_f,L_r])}^2
 +Ch^{2k+2}+C\Delta th^{2k+2}.
 \end{split}
\end{equation*}
By Gronwall's inequality, there exists a constant $h_0>0$, for $h\leq h_0$, we obtain
\[\mathbb{E}\Big(\|\varepsilon^{n}\|_{{\mathbb L}^2([L_f,L_r])}^2+\|\eta^{n}\|_{{\mathbb L}^2([L_f,L_r])}^2\Big)\leq Ch^{2k+2}+C\Delta t^{-1}h^{2k+2}, \quad \forall n.\]
I.e.,
\begin{equation}\label{spartial}
\mathbb{E}\|u^n-u_{h}^{n}\|_{{\mathbb L}^2}^2\leq Ch^{2k+2}+C\Delta t^{-1}h^{2k+2}.
\end{equation}

The proof is finished.
\end{proof}

\subsection{Main result}
Combining Theorem \ref{Time} and Theorem \ref{the111}, we obtain the error estimate of  (\ref{LDG22}).
\begin{theorem}
Let $u(x,t)$ be the exact solution of the problem \eqref{NLS}, and assume the initial value $u_{0}(x)\in L^2(\Omega;{\mathbb H}^{k+2})$ and $\phi\in{\mathcal L}_2({\mathbb L}^2;{\mathbb H}^{k+2})$ $(k\geq 1)$ . Let $u_{h}^{n}$ be the numerical solution of the symplectic local discontinuous Galerkin method \eqref{LDG22}. Then there exists a constant $h_0>0$ such that for $h\leq h_0$ holds
\begin{equation}
\mathbb{E}\|u(t_{n})-u_{h}^{n}\|_{{\mathbb L}^2}^2\leq C\Delta t^{2}+Ch^{2k+2}+C\Delta t^{-1}h^{2k+2}.
\end{equation}
%where the constant $C$ depends on the finial time $T$, $k$ and the derivatives of $u$.
\end{theorem}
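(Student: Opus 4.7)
The plan is to exploit the decomposition introduced at the very start of Section 3, which splits the global mean-square error between $u(\cdot,t_n)$ and $u_h^n$ into a temporal error $u(\cdot,t_n)-u^n$ and a spatial error $u^n-u_h^n$. Using the elementary inequality $(a+b)^2\le 2a^2+2b^2$ on the $\mathbb{L}^2$-norm, one obtains
\begin{equation*}
\mathbb{E}\|u(t_n)-u_h^n\|_{\mathbb{L}^2}^2 \le 2\,\mathbb{E}\|u(\cdot,t_n)-u^n\|_{\mathbb{L}^2}^2 + 2\,\mathbb{E}\|u^n-u_h^n\|_{\mathbb{L}^2}^2,
\end{equation*}
so that the two contributions can be controlled separately.

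For the temporal piece, I would invoke Theorem \ref{Time}. Its hypotheses ($u_0\in L^2(\Omega;\mathbb{H}^3)$, $Q\in\mathbb{H}^3$, $\phi\in\mathcal{L}_2(\mathbb{L}^2;\mathbb{H}^3)$) are implied by the present assumptions $u_0\in L^2(\Omega;\mathbb{H}^{k+2})$ and $\phi\in\mathcal{L}_2(\mathbb{L}^2;\mathbb{H}^{k+2})$ as soon as $k\ge 1$, since then $k+2\ge 3$; this is the one small compatibility check that must be noted. Theorem \ref{Time} then delivers $\mathbb{E}\|u(\cdot,t_n)-u^n\|_{\mathbb{L}^2}^2\le K\Delta t^2$.

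For the spatial piece, I would apply Theorem \ref{the111} directly: under the assumption $u_0\in L^2(\Omega;\mathbb{H}^{k+2})$ and $\phi\in\mathcal{L}_2(\mathbb{L}^2;\mathbb{H}^{k+2})$, there exists $h_0>0$ such that for every $h\le h_0$,
\begin{equation*}
\mathbb{E}\|u^n-u_h^n\|_{\mathbb{L}^2}^2 \le C h^{2k+2} + C \Delta t^{-1} h^{2k+2}.
\end{equation*}
Summing the two bounds and absorbing the factor $2$ into the generic constant $C$ yields the stated estimate
\begin{equation*}
\mathbb{E}\|u(t_n)-u_h^n\|_{\mathbb{L}^2}^2 \le C\Delta t^2 + C h^{2k+2} + C\Delta t^{-1}h^{2k+2}.
\end{equation*}

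There is no genuine obstacle in this final step: all the heavy lifting has been done in Theorem \ref{Time} (whose proof combined the mild formulation, the splitting into $\mathcal{A},\mathcal{B},\mathcal{C}$, It\^o isometry, the truncated-Wiener estimates \eqref{trunW}, and Gronwall's lemma) and in Theorem \ref{the111} (whose proof hinged on the energy identity \eqref{energyequation}, the special projections $\mathcal{P}$ and $\mathcal{P}^-$, the approximation estimate \eqref{Projection}, Lemmas \ref{th4}--\ref{th66}, and once again Gronwall's inequality). The present theorem is therefore essentially a corollary; the only point worth stressing in the write-up is the verification that the regularity assumption $u_0\in L^2(\Omega;\mathbb{H}^{k+2})$ is strong enough to feed both previous theorems simultaneously, and that the same constant $h_0$ from Theorem \ref{the111} can be retained.
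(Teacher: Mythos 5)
Your proposal is correct and follows exactly the paper's own route: the error is split into the temporal part controlled by Theorem \ref{Time} and the spatial part controlled by Theorem \ref{the111}, and the two bounds are added with the factor $2$ absorbed into the generic constant. Your explicit check that $k\ge 1$ guarantees $\mathbb{H}^{k+2}\subset\mathbb{H}^{3}$, so the hypotheses of Theorem \ref{Time} are met, is a small but worthwhile addition that the paper leaves implicit.
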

The overall convergence rate is usually expressed in terms of the computational cost of the scheme \citep{RefJentzen}.
Here the computational cost of method \eqref{LDG22} is denoted by $M=N\cdot J$ with $N$ and $J$ being the total grid number in temporal and spacial directions, respectively. In view of the above error bound, it is optimal to choose $N=M^{\frac{2k+2}{2k+5}}$ and $J=M^{\frac{3}{2k+5}}$, i.e., $\Delta t=O(\frac{1}{N})=O\left(\left(\frac{1}{M}\right)^{\frac{2k+2}{2k+5}}\right)$ and $h=O(\frac{1}{J})=O\left(\left(\frac{1}{M}\right)^{\frac{3}{2k+5}}\right)$, and we have the optimal error bound
$$
\Big(\mathbb{E}\|u(t_{n})-u_{h}^{n}\|_{{\mathbb L}^2}^2\Big)^{\frac12}\leq C\left(\frac{1}{M}\right)^{\frac{2k+2}{2k+5}}.
$$

\begin{remark}
  If $k=1$, i.e., the initial data $u_0\in L^2(\Omega;{\mathbb H}^{3})$ and $\phi\in{\mathcal L}_2({\mathbb L}^2;{\mathbb H}^{3})$, then
   the mean-square convergence rate of the method \eqref{LDG22} with respect to the computational cost is $\frac{4}{7}$.
\end{remark}
\begin{remark}
In the section 3, the mean-square convergence was derived for the symplectic local discontinuous Galerkin method \eqref{LDG22} discretized equation \eqref{NLS}. Note that \eqref{NLS} is the linear Schr\"odinger equation.
As for nonlinear equation, truncation strategy may be needed to deal with the nonlinear term, as in \citep{RefBouard1',RefBouard4,RefLiu}. However, things are a bit technical for the error estimation of the symplectic local discontinuous Galerkin method, since if we employ truncated strategy, then it has to start by taking ${\mathbb H}^{\gamma}$-norm ($\gamma>\frac{d}{2}$) on the error equation; see Remark 3.2 in \citep{RefLiu}. It looks like other technical strategy is needed to derive the mean-square convergence for symplectic local discontinuous Galerkin method applied to nonlinear case, and it will be our future work.
\end{remark}

\clearpage

\appendix

\section*{Appendix A.\ Proof of lemma \ref{th4}}
%\section{Proof of lemma \ref{th4}}
\begin{proof}
We present the proof for $p=1$ in the following, and the general case follows similarly.
  First of all, we rewrite temporal semi-discretization system \eqref{time} into the function of $u^{n}$:
  \begin{align}\label{un}
    u^{n+1}=\hat{S}_{\Delta t}u^{n}-i\Delta t T_{\Delta t}Qu^{n+\frac12}-iT_{\Delta t}u^{n+\frac12}\Delta \tilde{W}_{n},
  \end{align}
  where $u^{n}$ denotes the complex function $r^{n}+is^{n}$, operators are defined by $\hat{S}_{\Delta t}=(I+i\frac{\Delta t}{2}\partial_{xx})^{-1}(I-i\frac{\Delta t}{2}\partial_{xx})$ and
  $T_{\Delta t}=(I+i\frac{\Delta t}{2}\partial_{xx})^{-1}$, where $I$ is an identity operator.

  It is easy to check that the operator $\hat{S}_{\Delta t}$ is isometry in ${\mathbb L}^2$, i.e., $\|\hat{S}_{\Delta t}\|_{{\mathcal L}({\mathbb L}^2;{\mathbb L}^2)}=1$.
  Furthermore, we know that $\|T_{\Delta t}\|_{{\mathcal L}({\mathbb L}^2;{\mathbb L}^2)}\leq 1$. See reference \citep{RefBouard4} for example.

  Next, we replace the function of $u^{n}$ into equation \eqref{un} iteratively. We obtain
  \begin{align}\label{iterate}
    u^{n}=\hat{S}_{\Delta t}^{n}u^{0}-i\Delta t\sum_{\ell=1}^{n}\hat{S}_{\Delta t}^{n-\ell}T_{\Delta t}Qu^{\ell-\frac12}
    -i\sum_{\ell=1}^{n}\hat{S}_{\Delta t}^{n-\ell}T_{\Delta t}u^{\ell-\frac12}\Delta \tilde{W}_{\ell-1}.
  \end{align}
  In order to bound function $u^{n}$, we insert the equality $u^{\ell-\frac12}=\frac12(\hat{S}_{\Delta t}+I)u^{\ell-1}+\frac12\big(u^{\ell}-\hat{S}_{\Delta t}u^{\ell-1}\big)$ into
  the stochastic term and take ${\mathbb H}^{\gamma}$-norm to get
  \begin{equation}\label{unu0}
  \begin{split}
    \|u^{n}\|_{{\mathbb H}^{\gamma}}^{2}\leq & K\|u^0\|_{{\mathbb H}^{\gamma}}^{2}+K\Delta t\sum_{\ell=1}^{n}\|u^{\ell-\frac12}\|_{{\mathbb H}^{\gamma}}^{2}
    +K\Big\|\frac{i}{2}\sum_{\ell=1}^{n}\hat{S}_{\Delta t}^{n-\ell}T_{\Delta t}\big(\hat{S}_{\Delta t}+I\big)u^{\ell-1}\Delta \tilde{W}_{\ell-1}\Big\|_{{\mathbb H}^{\gamma}}^{2}\\
    &+Kn\sum_{\ell=1}^{n}\big\|(u^{\ell}-\hat{S}_{\Delta t}u^{\ell-1})\Delta \tilde{W}_{\ell-1}\big\|_{{\mathbb H}^{\gamma}}^{2}.
  \end{split}
  \end{equation}
  For the third term on the right-hand side of \eqref{unu0}, using the fact that $u^{\ell-1}$ is independent of increment $\Delta\tilde{W}_{\ell-1}$, we have
  \begin{equation}\label{unu2}
  \begin{split}
   {\mathbb E} \Big\|\frac{i}{2}\sum_{\ell=1}^{n}\hat{S}_{\Delta t}^{n-\ell}T_{\Delta t}\big(\hat{S}_{\Delta t}+I\big)u^{\ell-1}\Delta \tilde{W}_{\ell-1}\Big\|_{{\mathbb H}^{\gamma}}^{2}
   &=\sum_{\ell=1}^{n}{\mathbb E}\Big\|\hat{S}_{\Delta t}^{n-\ell}T_{\Delta t}\big(\hat{S}_{\Delta t}+I\big)u^{\ell-1}\Delta \tilde{W}_{\ell-1}\Big\|_{{\mathbb H}^{\gamma}}^{2}\\
   &\leq K\Delta t\sum_{\ell=1}^{n}{\mathbb E}\|u^{\ell-1}\|_{{\mathbb H}^{\gamma}}^{2}.
   \end{split}
  \end{equation}
  To estimate the last term on the right-hand side of \eqref{unu0}, we note that
  \begin{equation}\label{unu4}
  \begin{split}
    (u^{\ell}-\hat{S}_{\Delta t}u^{\ell-1})\Delta \tilde{W}_{\ell-1}=&-i\Delta tT_{\Delta t}Qu^{\ell-\frac12}\Delta \tilde{W}_{\ell-1}-\frac{i}{2}T_{\Delta t}(\hat{S}_{\Delta t}+I)u^{\ell-1}(\Delta\tilde{W}_{\ell-1})^2\\
    &-\frac{i}{2}T_{\Delta t}\Big((u^{\ell}-\hat{S}_{\Delta t}u^{\ell-1})\Delta \tilde{W}_{\ell-1}\Big)\Delta\tilde{W}_{\ell-1}.
  \end{split}
  \end{equation}
  Taking $L^2(\Omega;{\mathbb H}^{\gamma})$-norm to obtain
  \begin{equation}\label{unu1}
  \begin{split}
  {\mathbb E}\big\|(u^{\ell}-\hat{S}_{\Delta t}u^{\ell-1})\Delta \tilde{W}_{\ell-1}\big\|_{{\mathbb H}^{\gamma}}^2
  \leq& K\Delta t^2 (\Delta t\kappa^2){\mathbb E}\|u^{\ell-\frac12}\|_{{\mathbb H}^{\gamma}}^2+K\Delta t^2{\mathbb E}\|u^{\ell-1}\|_{{\mathbb H}^{\gamma}}^2\\
  &+K(\Delta t\kappa^2){\mathbb E}\|(u^{\ell}-\hat{S}_{\Delta t}u^{\ell-1})\Delta \tilde{W}_{\ell-1}\|_{{\mathbb H}^{\gamma}}^2,
  \end{split}
  \end{equation}
  where we use the embedding ${\mathbb H}^1\hookrightarrow {\mathbb L}^{\infty}$ for $\gamma=0$, or use $\|fg\|_{{\mathbb H}^{\gamma}}\leq K\|f\|_{{\mathbb H}^{\gamma}}\|g\|_{{\mathbb H}^{\gamma}}$ for $\gamma\geq 1$.
  Note that there exists a constant $\Delta t^{*}>0$ such that $K(\Delta t\kappa^2)\leq \frac12<1$ for $\Delta t\leq \Delta t^{*}$ (here $K$ is the same as the last term on the right-hand side of \eqref{unu1}), which leads to
  \begin{equation}\label{unu3}
    \frac12{\mathbb E}\big\|(u^{\ell}-\hat{S}_{\Delta t}u^{\ell-1})\Delta \tilde{W}_{\ell-1}\big\|_{{\mathbb H}^{\gamma}}^2
    \leq K\Delta t^2 \Big({\mathbb E}\|u^{\ell}\|_{{\mathbb H}^{\gamma}}^2+{\mathbb E}\|u^{\ell-1}\|_{{\mathbb H}^{\gamma}}^2\Big).
  \end{equation}
  Combining inequalities \eqref{unu0}, \eqref{unu2} and \eqref{unu3} together, we have
 \begin{align*}
    {\mathbb E}\|u^{n}\|_{{\mathbb H}^{\gamma}}^{2}\leq K+K\Delta t\sum_{\ell=0}^{n}{\mathbb E}\|u^{\ell}\|_{{\mathbb H}^{\gamma}}^{2},
  \end{align*}
 where the positive constant $K$ depends on $p$, $T$, the $L^2$-norm of operator $T_{\Delta t}$, $\|u^{0}\|_{H^{\gamma}}$, but not depends on $\Delta t$.
  The discrete Gronwall's lemma leads to the assertion.
  \end{proof}
  \section*{Appendix B.\ Proof of lemma \ref{th66}}
  %\section{Proof of lemma \ref{th66}}
  \begin{proof}
    We present the proof for $p=1$. The estimation is similar as the proof of the last term on the right-hand side of \eqref{unu0}; see estimations \eqref{unu4}-\eqref{unu3}.  Start from equation \eqref{un},
  \begin{align*}
    u^{n+1}-u^{n}=(\hat{S}_{\Delta t}-I)u^{n}-i\Delta t T_{\Delta t}Qu^{n+\frac12}-iT_{\Delta t}u^{n+\frac12}\Delta \tilde{W}_{n}.
  \end{align*}

  Since $\|\hat{S}_{\Delta t}-I\|_{\mathcal{L}({\mathbb H}^{\gamma},{\mathbb H}^{\gamma-1})}\leq K\Delta t^{\frac12}$, we take $L^{2}(\Omega;{\mathbb H}^{\gamma-1})$-norm on both sides of the above equation and get
  \begin{equation}\label{unu5}
  \begin{split}
    \mathbb{E}\|u^{n+1}-u^{n}\|_{{\mathbb H}^{\gamma-1}}^{2}\leq&
    K\Delta t \mathbb{E}\|u^{n}\|_{{\mathbb H}^{\gamma}}^{2}+K\Delta t^{2}\mathbb{E}\Big(\|u^{n}\|_{{\mathbb H}^{\gamma-1}}^{2}+\|u^{n+1}\|_{{\mathbb H}^{\gamma-1}}^{2}\Big)\\
    &+K\Delta t{\mathbb E}\|u^{n}\|_{{\mathbb H}^{\gamma-1}}^{2}+K(\Delta t\kappa^2){\mathbb E}\|u^{n+1}-u^{n}\|_{{\mathbb H}^{\gamma-1}}^2.
    \end{split}
  \end{equation}
  there exists a constant $\Delta t^{*}>0$ such that $K(\Delta t\kappa^2)\leq \frac12<1$ for $\Delta t\leq \Delta t^{*}$ (here $K$ is the same as the last term on the right-hand side of \eqref{unu5}), which leads to
  \begin{equation*}
    \frac12 \mathbb{E}\|u^{n+1}-u^{n}\|_{{\mathbb H}^{\gamma-1}}^{2}\leq K\Delta t{\mathbb E}\|u^{n}\|_{{\mathbb H}^{\gamma}}^{2}\leq K\Delta t.
  \end{equation*}
   This completes the proof.
  \end{proof}


\begin{thebibliography}{10}

\bibitem[Bouard \& Debussche(2004)Bouard \& Debussche]
{RefBouard1'}
\textsc{De Bouard, A. \& Debussche, A.} (2004)
\newblock A semi-discrete scheme for the stochastic nonlinear Schr\"odinger equation.
\newblock \emph{Numer. Math.}, \textbf{96}, 733-770.

\bibitem[Bouard \& Debussche(2003)Bouard \& Debussche]
{RefBouard3}
\textsc{De Bouard, A. \& Debussche, A.} (2003)
\newblock The stochastic nonlinear Schr\"{o}dinger equation in $H^{1}$.
\newblock \emph{Stoch. Anal. and Appl.}, \textbf{21}, 97-126.

\bibitem[Bouard \& Debussche(2006)Bouard \& Debussche]
{RefBouard4}
\textsc{De Bouard, A. \& Debussche, A.} (2006)
\newblock Weak and strong order of convergence of a semi-discrete
scheme for the stochastic nonlinear Schr\"{o}dinger equation.
\newblock \emph{Appl. Math. Optim.}, \textbf{54}, 369-399.

\bibitem[Chen \& Hong(2014)Chen \& Hong]
{RefChu}
\textsc{Chen, C. \& Hong, J.} (2014)
\newblock Symplectic Runge-Kutta semi-discretization for stochastic Schr\"{o}dinger equation.
\newblock arXiv:1410.6228, 2014.


\bibitem[Cockburn \emph{et al.}(2000)Cockburn, Hou \& Shu]
{RefCockburn2}
\textsc{Cockburn, B., Hou, S. \& Shu, C.-W.} (2000)
\newblock The development of discontinuous Galerkin methods, in: B. Cockburn, G. Karniadakis, C.-W. Shu (Eds.)
\newblock Lecture Notes in Computational Science and Engineering, Springer, Berlin, \textbf{11},  3-50.

\bibitem[Cockburn \& Shu(1998)Cockburn \& Shu]
{RefCockburn4}
\textsc{Cockburn, B. \& Shu, C.-W.} (1998)
\newblock The local discontinuous Galerkin methods for time-dependent convection-diffusion systems.
\newblock \emph{SIAM J. Numer. Anal.}, \textbf{35}, 2440-2463.


\bibitem[Cockburn \& Shu(2001)Cockburn \& Shu]
{RefCockburn3}
\textsc{Cockburn, B. \& Shu, C.-W.} (2001)
\newblock Runge-Kutta discontinuous Galerkin methods for convection-dominated problems.
\newblock \emph{J. Sci. Comput.}, \textbf{16}, 173-261.


\bibitem[Jentzen \& Kloeden(2011)Jentzen \& Kloeden]
{RefJentzen}
\textsc{Jentzen, A. \& Kloeden, P. E.} (2011)
\newblock Taylor approximations for stochastic partial differential equations.
\newblock CBMS-NSF Regional Conf. Ser. in Appl. Math. \textbf{83}.


\bibitem[Liu(2013)Liu]
{RefLiu}
\textsc{Liu, J.} (2013)
\newblock Order of convergence of splitting schemes for both deterministic and stochastic nonlinear
Schr\"odinger equations.
\newblock \emph{SIAM J. Numer. Anal.}, \textbf{51}, 1911-1932.

\bibitem[Milstein \emph{et al.}(2002)Milstein, Repin \& Tretyakov]
{RefMils}
\textsc{Milstein, G. N., Repin, Yu. M. \& Tretyakov, M. V.} (2002)
\newblock Numerical methods for stochastic
systems preserving symplectic structure.
\newblock \emph{ SIAM J. Numer. Anal.}, \textbf{40}, 1583-1604.




\bibitem[Xu \& Shu(2005)Cockburn \& Shu]
{RefOhannes}
\textsc{Xu, Y. \& Shu, C.-W.} (2005)
\newblock Local discontinuous Galerkin methods for nonlinear Schr\"{o}dinger equations.
\newblock \emph{J. Comput. Phys.}, \textbf{205}, 72-97.

\end{thebibliography}
\end{document}